\documentclass[]{article}

\newcommand{\un}{\text{ $\underline{\triangleright}$ }}
\newcommand{\ovr}{\text{ $\overline{\triangleright}$ }}
\newcommand{\Hom}{\operatorname{Hom}}
\newcommand{\Col}{\operatorname{Col}}
\newcommand{\Z}{\mathbb{Z}}
\newcommand{\bvb}[1]{\langle #1\rangle_{\beta}}
\newcommand{\bvbn}[1]{\langle #1\rangle_{\beta}^{\operatorname{n}}}
\newcommand{\fbvb}[1]{\langle #1\rangle_{\mathfrak{B}}}
\newcommand{\fbvbn}[1]{\langle #1\rangle_{\mathfrak{B}}^{\operatorname{n}}}
\newcommand{\bvbm}[1]{\mathcal{M}_X^{\beta}(#1)}
\newcommand{\bcm}[1]{\mathcal{M}_X(#1)}
\newcommand{\bvbmn}[1]{\widetilde{\mathcal{M}}_X^{\beta}(#1)}
\newcommand{\bvbmult}[1]{\Phi_X^{\beta,M}(#1)}
\newcommand{\bvbnmult}[1]{\widetilde{\Phi}_X^{\beta,M}(#1)}

\usepackage[a4paper,margin=1in]{geometry}
\usepackage[none]{hyphenat}
\usepackage{xcolor, array, amssymb}
\usepackage{amsmath,amsfonts,graphicx,amsthm,setspace,subcaption,overpic}
\usepackage{etoolbox,tikz-cd,rotating,url,booktabs}

\theoremstyle{definition}
\newtheorem{theorem}{Theorem}[subsection]
\newtheorem{example}{Example}[subsection]
\newtheorem{definition}{Definition}[subsection]
\newtheorem{proposition}{Proposition}[subsection]

\newtheorem{corollary}{Corollary}[subsection]
\newtheorem{remark}{Remark}[subsection]

\author{
	Hamdi Kayaslan\thanks{Dept. of Mathematics, Izmir Institute of Technology, Izmir, Turkey. Email: \texttt{hamdikayaslan@iyte.edu.tr}.}
	\and
	Sel\c{c}uk \.Ilbeyli\thanks{Dept. of Mathematics, Izmir Institute of Technology, Izmir, Turkey. Email: \texttt{selcukilbeyli@std.iyte.edu.tr}.}
}

\title{Biquandle-Based Invariants of Virtual Knotoids under Connected Sum}
\date{}

\begin{document}
	\maketitle
    \onehalfspacing
	\begin{abstract}
     We study the behavior of biquandle-based invariants of virtual knotoids under their connected sum. We show that the fundamental biquandle of the connected sum of two virtual knotoids is the pushout of a span in the category of biquandles. In particular,
        \begin{align}
        \label{fundamental-formula}
        \mathcal B(K_1\#K_2)
        \cong
        \mathcal B(K_1)*_{F_1}\mathcal B(K_2),
        \end{align}
        where the right-hand side is the amalgamated product of the fundamental biquandles of the summands over the free biquandle on one generator $F_1$. By applying the Hom functor to \eqref{fundamental-formula}, we obtain the correspondence between biquandle colorings of $K_1\# K_2$ and compatible pairs of colorings of summands. This provides a categorical explanation of a known matrix product formula for biquandle counting matrices under connected sum.
        
        Next, we introduce normalized multiset invariants derived from biquandle virtual brackets and show that they are strictly stronger than their unnormalized versions in certain cases by an explicit example. We then study the behavior of these invariants under connected sum. We show that, for each coloring of the connected sum $K_1\#K_2$ corresponding to a compatible pair of colorings of the summands $K_1$ and $K_2$, the normalized biquandle virtual bracket value factors as the product of the values of the summands. That is,
        \[
        \langle (K_1\#K_2)_{f} \rangle_{\beta}^{\operatorname{n}}=\langle (K_1)_{f_1} \rangle_{\beta}^{\operatorname{n}}\langle (K_2)_{f_2} \rangle_{\beta}^{\operatorname{n}}.
        \]
        Building on this, we obtain connected-sum formulas for the normalized multiset invariants defined by utilizing biquandle virtual brackets.
        
        When the coefficient ring is a number ring, the normalized bracket multisets can be encoded by polynomials and polynomial matrices. We introduce a polynomial matrix product $\odot$ and show that the normalized biquandle virtual bracket matrices satisfy
        \[
        \widetilde{\mathcal{M}}_X^{\beta}(K_1\#K_2)
        =
        \widetilde{\mathcal{M}}_X^{\beta}(K_1)
        \odot
        \widetilde{\mathcal{M}}_X^{\beta}(K_2).
        \]
	\end{abstract}

\section{Introduction}

    Knotoids were introduced by Turaev in~\cite{turaev2012knotoids} as the
    theory of open-ended knot diagrams in oriented surfaces. A knotoid diagram is based
    on an oriented interval with two distinguished endpoints, called the
    \textit{tail} and the \textit{head}. Knotoid diagrams are considered up to
    \textit{Reidemeister moves} performed away from the endpoints, while a passage of an
    endpoint over or under another strand is forbidden. The theory of knotoids has been shown to be useful in applications to open chains, including protein chains and other polymeric structures \cite{goundaroulis2017studies,goundaroulis2017topological,dorier2018tool}.

    Virtual knot theory, introduced by Kauffman in~\cite{kauffman1999virtual},
    extends diagrammatic classical knot theory by introducing \textit{virtual crossings} and adding the \textit{detour move} to the usual Reidemeister moves in the equivalence relation. This theory also admits a geometric
    interpretation in terms of knots in thickened surfaces, considered up to
    stabilization and destabilization \cite{carter2002stable,kamada2000abstract}. 
    
    Virtual knotoids combine
    the endpoint structure of knotoids with the virtual framework of the virtual knot theory. The theory was
    introduced by Gügümcü and Kauffman in~\cite{gugumcu2017new}, together with a surface interpretation of the theory in terms of knotoids in surfaces. A three-dimensional interpretation in terms of rail arcs in thickened surfaces was given by Gügümcü and the first author in
    \cite{gugumcu2025virtual}.

    Biquandles provide an algebraic framework for coloring classical and virtual knot and knotoid diagrams. Biquandle operations determine the local coloring rules at classical crossings. An example of a biquandle is the \textit{fundamental biquandle} $\mathcal{B}(K)$ of a virtual knotoid diagram $K$. The fundamental biquandle is presented by generators corresponding to the semiarcs of $K$, up to the relations determined by the classical crossings of $K$. For a given biquandle $X$, an \textit{$X$-coloring} of $K$ is a biquandle homomorphism $\mathcal{B}(K)\rightarrow X$. The  set of $X$-colorings of $K$ is denoted by $\Col_X(K)$.

    Biquandle colorings give rise to several invariants for knotted structures. The \textit{biquandle counting invariant}, for instance, is defined as the number of colorings of a diagram with a choosen biquandle. For (virtual) knotoids, the existence of distinguished tail and head semiarcs provide a partition of biquandle colorings by recording the colors of these distinguished semiarcs. An \textit{$X_{ij}$-coloring} of a virtual knotoid diagram $K$ is an $X$-coloring of $K$ where the tail and the head semiarcs of $K$ are colored by $x_i$ and $x_j$ in $X$, respectively. The set of such colorings is denoted by $\Col_X^{ij}(K)$. The number of $X_{ij}$-colorings of $K$ can be stored in a matrix, called the \textit{biquandle counting matrix} invariant, denoted by $\mathcal{M}_X(K)$. For classical knotoids \cite{gugumcu2021biquandle} and virtual knotoids \cite{gugumcu2026biquandle}, it was shown that the biquandle counting matrices satisfy
    \begin{align}
    \bcm{K_1\#K_2}=\bcm{K_1}\bcm{K_2}.
    \label{eq:bcmcs}
    \end{align}
    
    Biquandle coloring invariants are further enhanced by assigning algebraic data to biquandle-colored
    diagrams. \textit{Biquandle brackets}
    \cite{nelson2017quantum} and their extension, \textit{biquandle virtual
    brackets} \cite{nelson2019biquandle}, provide algebraic structures together with state-sum formulas which assign values to biquandle-colored diagrams. Special cases of these structures and resulting invariants include skein invariants such as the Kauffman bracket \cite{kauffman1987state}. In \cite{gugumcu2026biquandle}, biquandle virtual brackets were utilized to obtain new invariants of virtual knotoids. Moreoever, as in the case of biquandle colorings, biquandle virtual bracket values were organized with respect to the endpoint semiarc colors and \textit{biquandle virtual bracket matrix} invariant of virtual knotoids was instroduced.

    The \textit{product} of knotoids, introduced by Turaev in~\cite{turaev2012knotoids}, is a binary operation obtained by joining the head of the first knotoid diagram to the
    tail of the second. Throughout the paper, we refer to this product as the \textit{connected sum} of knotoids and denote it by $\,\#\,$. The construction of the connected sum extends to virtual knotoids \cite{gugumcu2026biquandle} and it was shown that the set of virtual knotoids endowed with connected sum operation forms a monoid. The aim of this paper is to investigate the behavior of biquandle-based invariants of virtual knotoids, such as the fundamental biquandle, biquandle coloring invariants and biquandle virtual bracket invariants, under connected sum.
    
    Cattabriga and Horvat developed a categorical approach to tangles and their geometric compositions in \cite{cattabriga2020} by introducing the \textit{bordered-quandle category}. In this approach, the boundary data of tangles is represented by free quandles and geometric composition of tangles corresponds to composition of cospans by pushout in the bordered-quandle category. 
    
    It is well known that every monoid can be considered as a category with one object. Hence, the monoid of virtual knotoids under connected sum operation can be considered as a category with one object where the morphisms are virtual knotoids, and the composition is their connected sum operation. Motivated by the work in \cite{cattabriga2020}, we show that the connected sum in this category is reflected algebraically by a pushout in the biquandle category. More precisely, if $F_1$ denotes the free biquandle on one generator, then
    \[
    \mathcal B(K_1\#K_2)\cong\mathcal B(K_1)*_{F_1}\mathcal B(K_2),
    \]
    where the two structure maps in amalgamation on the right-hand side send the generator of $F_1$ to the head element of $K_1$ and the tail element of $K_2$, respectively.

    We show that the categorical description also clarifies the behavior of biquandle colorings of virtual knotoids under connected sum. Applying the contravariant $\Hom$ functor with target biquandle $X$ turns
    the aforementioned pushout into a pullback:
    \[
    \Col_X(K_1\#K_2)
    \cong
    \Col_X(K_1)\times_X\Col_X(K_2),
    \]
    in the category of sets where pullback compatibility requires the head semiarc color of $K_1$ to be equal the tail semiarc color of $K_2$. For fixed elements $x_i,x_k\in X$, this gives
    \[
    \Col_X^{ik}(K_1\#K_2)
    \cong
    \bigsqcup_{x_j\in X}
    \Col_X^{ij}(K_1)\times
    \Col_X^{jk}(K_2).
    \]
    Taking cardinalities recovers the previously known biquandle counting matrix
    formula~\eqref{eq:bcmcs}. Thus, the multiplicativity of biquandle counting matrix follows as a natural consequence of the pushout description of the fundamental biquandle. Hence, investigating the behavior of the fundamental biquandle under connected sum leads to an explanation of this matrix product formula.

    In \cite{pflume}, Pflume introduced \textit{pointed biquandles} and the \textit{fundamental pointed biquandle} of a linkoid. In this structure, generators corresponding to endpoint semiarcs are retained as distinguished elements in the fundamental biquandle represetation. We extend the notion of pointed fundamental biquandles to virtual knotoids and use this pointed approach to specify generators which are getting identified in connected sum.

    The second aim of the paper is to determine how the biquandle virtual bracket values and induced invariants behave under connected
    sum. A \textit{state} of a virtual knotoid diagram $K$ is obtained by \textit{smoothing} each classical crossing of $K$. Every state of $K$ contains a unique open component, together with a collection of closed components. We introduce the \emph{normalized} biquandle virtual bracket state-sum obtained by replacing the usual component factor $\;\delta^{|s|}\;$ in the construction in \cite{nelson2019biquandle} by $\;\delta^{|s|-1},\;$
    so that the exponent records only the number of closed components in $s$. Both conventions appear in the literature on the Kauffman bracket, where they yield invariants with the same distinguishing power. More generally, the normalized and unnormalized biquandle virtual bracket invariants have the same distinguishing power whenever $\delta$ is not a zero divisor. However, when $\delta$ is a zero divisor, the normalized invariant may be strictly stronger. We demonstrate this by an explicit example. 
    
    We then establish that the normalized biquandle virtual bracket value is multiplicative under connected sum:
    \[
    \bvbn{(K_1\#K_2)f}=\bvbn{(K_1)_{f_1}}\bvbn{(K_2)_{f_2}},
    \]
    where $f$ is the $X$-coloring of $K_1\#K_2$ corresponding to compatible $X$-colorings $f_1$ and $f_2$ of $K_1$ and $K_2$, respectively.
    We combine this multiplicativity with the pullback description of biquandle colorings of the connected sum and obtain connected-sum formulas for the normalized biquandle virtual
    bracket multiset and its partitioned refinement with respect to endpoint semiarc colors over an arbitrary
    coefficient ring.

    When the coefficient ring of a biquandle virtual bracket is a number ring, these multiset invariants can be encoded by polynomials \cite{nelson2019biquandle, gugumcu2026biquandle}. Since normalized biquandle virtual bracket values are multiplicative under connected sum, the bracket values $a$ and $b$ contribute the monomial $u^{ab}$, rather than $u^{a+b}$. We therefore introduce the product $\star$, defined on monomials by $u^a\star u^b=u^{ab}$, and extend it distributively. Applying this operation entrywise in the usual matrix product fashion defines a product $\odot$ on normalized biquandle virtual bracket matrices. Then we prove that,
    \[
    \bvbmn{K_1\#K_2}=\bvbmn{K_1}\odot\bvbmn{K_2}.
    \]

    The paper is organized as follows. In Section~\ref{sec:prelims}, we review the ingredients used throughout the paper. We first review virtual knotoids and their connected-sum operation in Section~\ref{sec:vknotoid-prelims}. We then recall biquandles and biquandle-based invariants of virtual knotoids in Section~\ref{sec:biquandle-prelims}. Finally, in Section~\ref{sec:boundary-pointed-pushouts}, we review the necessary pointed and categorical background of biquandles. We present our results in Section~\ref{sec:connected-sum-results}. We begin with a description of the connected-sum operation using virtual connection arcs in Section~\ref{sec:new-connected-sum}. Then, in Section~\ref{sec:biquandle-behavior}, we determine the behavior of the fundamental biquandle under connected sum, give its pushout interpretation in the biquandle category and study the results of this categorical viewpoint. Finally, in Section~\ref{sec:bracket-behavior}, we introduce the normalized biquandle virtual bracket state-sum and establish connected-sum formulas for the induced multiset, polynomial, and matrix invariants. Section~\ref{sec:discussion} concludes with further remarks.

\section{Preliminaries}
\label{sec:prelims}

\subsection{Virtual knotoids and connected sum}
In this section, we recall the basics of virtual knotoid theory and the connected sum operation of virtual knotoids.
\label{sec:vknotoid-prelims}

\subsubsection{Virtual knotoids}
    Virtual knotoid theory, introduced by Gügümcü and Kauffman in \cite{gugumcu2017new}, combines the open-ended structure of knotoids \cite{turaev2012knotoids} with the virtual framework of virtual knot theory \cite{kauffman1999virtual}.

    \begin{figure}[h!]
        \centering
        \includegraphics[width=0.08\textwidth]{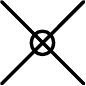}
        \caption{A virtual crossing.}
        \label{fig:virtual-crossing}
    \end{figure}
    
    \begin{definition}
        A \textit{virtual crossing} is a transverse double point marked by a small circle
        and carrying no over/under crossing information (see
        Figure~\ref{fig:virtual-crossing}). A \textit{virtual knotoid diagram} $K$ is a generic
        immersion of the oriented interval $I=[0,1]$ into $S^2$ whose transverse double
        points are either classical crossings, equipped with over/under
        information, or virtual crossings (see Figure~\ref{vdiagrams}). The images of
        $0$ and $1$ are called the \textit{tail} and the \textit{head} of $K$, respectively. A \textit{trivial virtual knotoid diagram} is a virtual knotoid diagram which contains no classical crossings. 
    \end{definition}

    \begin{figure}[h!]
        \centering
        \begin{subfigure}[b]{0.21\textwidth}
            \centering
            \includegraphics[width=\linewidth]{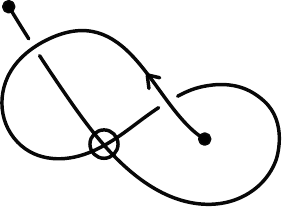}
            \caption{$K_1$}
        \end{subfigure}
        \hspace{1.8cm}
        \begin{subfigure}[b]{0.18\textwidth}
            \centering
            \includegraphics[width=\linewidth]{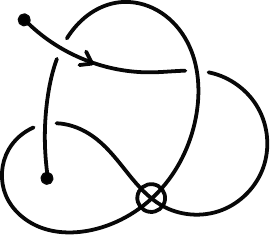}
            \caption{$K_2$}
        \end{subfigure}
        \caption{Two virtual knotoid diagrams.}
        \label{vdiagrams}
    \end{figure}

    \begin{definition}
        A \textit{semiarc} of a virtual knotoid diagram $K$ is a segment of the diagram between two consecutive crossings, between an endpoint and an adjacent crossing, or the whole diagram when the diagram contains no crossings. The semiarcs containing the tail and the head are called the \textit{tail semiarc} and \textit{head semiarc} of $K$, respectively.
    \end{definition}

    Virtual knotoid diagrams are considered up to
    isotopies of $S^2$ and \textit{generalized Reidemeister moves} shown in Figure~\ref{fig:reidemeister}. The resulting equivalence classes are called \textit{virtual knotoids}. The \textit{detour move} allows a segment in a virtual knotoid diagram containing only
    virtual crossings to be replaced by another segment with the same
    endpoints, where all intersections of the new segment with the diagram are virtual crossings. The moves involving virtual crossings in Figure~\ref{fig:reidemeister} are generated by the detour move.

    \begin{figure}[h!]
        \centering
        \includegraphics[width=0.43\linewidth]{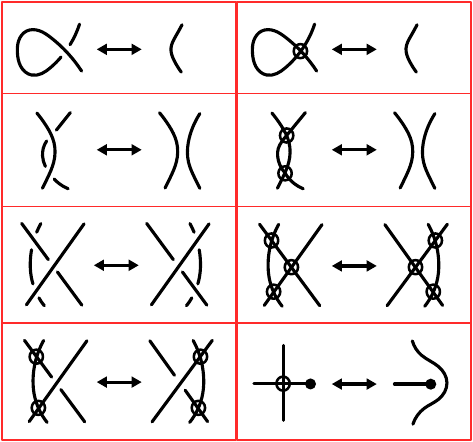}
        \caption{Generalized Reidemeister moves for virtual knotoid diagrams.}
        \label{fig:reidemeister}
    \end{figure}

    The move shown in the bottom of the right-hand side of Figure~\ref{fig:reidemeister} is called the \textit{virtual endpoint move}. If the crossing adjacent to an endpoint is virtual, this move allows the endpoint is to move past the strand. Conversely, an endpoint can pass through a strand creating a virtual crossing by the virtual endpoint move. However, it is not allowed to move an endpoint under or over a strand through a classical crossing. This move is called the \textit{forbidden move} and shown in Figure~\ref{fig:forbidden}.

    \begin{figure}[h!]
        \centering
        \includegraphics[width=0.43\linewidth]{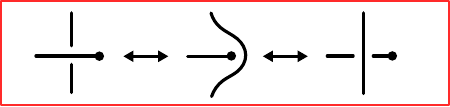}
        \caption{Forbidden move for virtual knotoid diagrams.}
        \label{fig:forbidden}
    \end{figure}

    A knotoid diagram in $S^2$ can be considered as a virtual knotoid diagram
    with no virtual crossings. It was shown in \cite{gugumcu2025virtual} that the map from
    knotoids in $S^2$ to virtual knotoids induced by inclusion map of diagrams is injective, and hence knotoid theory in $S^2$ embeds into
    virtual knotoid theory. 
    
    Virtual knotoids also admit geometric interpretations as knotoids in surfaces \cite{gugumcu2017new} and as rail arcs in thickened surfaces \cite{gugumcu2025virtual} considered up to stabilization and destabilization.
    
\subsubsection{Connected sum of virtual knotoids}

    By the connected sum operation of knotoids, we mean the product introduced by Turaev in \cite{turaev2012knotoids}. This operation was extended to virtual knotoids \cite{gugumcu2026biquandle}. We recall the definition for virtual knotoids, which also applies to knotoids in $S^2$.

    \begin{definition}
		Let $K_1$ and $K_2$ be two virtual knotoid diagrams with tails $t_1,t_2$ and
		heads $h_1,h_2$. Let $U$ and $V$ be closed disk neighborhoods of
		$h_1$ and $t_2$, respectively, such that each disk meets the corresponding
		virtual knotoid diagram in a single radial arc. Remove the interiors of
		$U$ and $V$ and identify the resulting boundary circles by an orientation-reversing homeomorphism $ \varphi:\partial U\to\partial V$
		which sends the point $K_1\cap \partial U$ to the point $K_2\cap \partial V$. The resulting virtual knotoid diagram is the connected sum of $K_1$ and $K_2$, which we denote by $K_1\# K_2$. See Figure~\ref{fig:connected-sum} for an illustration.
	\end{definition}

    \begin{figure}[h!]
		\centering
		\includegraphics[width=0.35\textwidth]{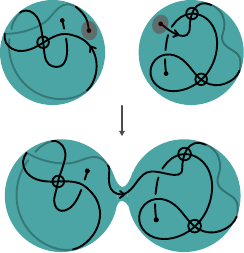}
		\caption{The connected sum $K_1\#K_2$ of virtual knotoid diagrams $K_1$ and $K_2$.}
		\label{fig:connected-sum}
	\end{figure}

    The connected sum construction is well defined up to the equivalence of knotoids \cite{turaev2012knotoids} and virtual knotoids \cite{gugumcu2026biquandle}. Thus, if $\mathcal{K}^{\mathrm{v}}$ denotes the set of virtual knotoids, connected sum induces a well-defined operation
    \begin{align*}\#:\mathcal{K}^{\mathrm{v}}\times\mathcal{K}^{\mathrm{v}}
    &\longrightarrow
    \mathcal{K}^{\mathrm{v}}\\
    ([K_1],[K_2])
    &\longmapsto
    [K_1\#K_2].
    \end{align*}
    The connected sum operation is associative, has the trivial virtual knotoid as its identity element, and hence the set of virtual knotoids forms a monoid under connected sum. However, unlike the connected sum of knots, the operation is not commutative in (virtual) knotoid setting.

\subsection{Biquandle-based invariants of virtual knotoids}
\label{sec:biquandle-prelims}
    
    In this section, we review biquandles and biquandle coloring invariants of virtual knotoids. Then, we continue with their enhancements obtained by utilizing biquandle virtual brackets.

\subsubsection{Biquandles and biquandle colorings}

    We begin with the category of biquandles.
    
    \begin{definition}
        A \textit{biquandle} is a set $X$ with two binary operations $\,\un,\ovr : X\times X\rightarrow X$
        such that, for all $x,y,z\in X$:
        \begin{enumerate}
        \item [i.] $x\un x=x\ovr x,$
        \item [ii.] the maps
        $\;\alpha_y(x)=x\un y,\;
        \gamma_y(x)=x\ovr y,\; \text{and }
        S(x,y)=\bigl(y\ovr x,\ x\un y\bigr)\;$
        are bijections,
        \item [iii.] the exchange laws hold:
        \begin{align*}
        (x\un y)\un(z\un y)=
        (x\un z)\un(y\ovr z),\\
        (x\un y)\ovr(z\un y)=
        (x\ovr z)\un(y\ovr z),\\
        (x\ovr y)\ovr(z\ovr y)=
        (x\ovr z)\ovr(y\un z).
        \end{align*}
        \end{enumerate}
        \end{definition}
    
        \begin{definition}
        Let $X$ and $Y$ be biquandles. A map $f:X\rightarrow Y$ is a \textit{biquandle homomorphism} if for all $x,y\in X$
        \[
        f(x\un y)=f(x)\un f(y),\qquad
        f(x\ovr y)=f(x)\ovr f(y).
        \]
    \end{definition}

    Biquandles and biquandle homomorphisms form a category, which we denote by
    $\mathbf{Bq}$. The set of biquandle homomorphisms from $X$ to $Y$ is denoted by $\Hom_{\mathbf{Bq}}(X,Y).$

    \begin{definition}
    Let $K$ be a virtual knotoid diagram with labeled semiarcs. Let $L(K)$ denote the set of these labels and $R(K)$ denote the set of relations among the labels of the semiarcs at the crossings of $K$, as shown in Figure \ref{fig:biquandle-coloring}. The
    \textit{fundamental biquandle} of $K$, denoted by $\mathcal{B}(K)$, is the biquandle generated by the elements of $L(K)$ considered up to the relations $R(K)$. That is,
    \[\mathcal{B}(K)=\langle L(K)\;|\; R(K)\rangle.\]
    \end{definition}

    \begin{figure}[h!]
        \centering
        \includegraphics[width=0.5\textwidth]{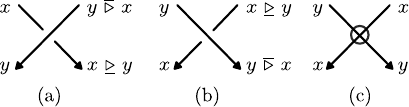}
        \caption{Biquandle labelings of semiarcs around classical and virtual crossings.}
        \label{fig:biquandle-coloring}
    \end{figure}

    It was shown in \cite{gugumcu2026biquandle} that $\mathcal{B}(K)$ is an invariant  of virtual knotoids up to biquandle isomorphisms.

    \begin{definition}
        Let $K$ be a virtual knotoid diagram and let $X$ be a biquandle. An \textit{$X$-coloring} of $K$ is a biquandle homomorphism $\;f:\mathcal B(K)\rightarrow X.$ Hence, the set of $X$-colorings of $K$ is 
        \[
        \Col_X(K)=\Hom_{\mathbf{Bq}}\left(\mathcal B(K),X\right).
        \]
        A virtual knotoid diagram $K$ equipped with an $X$-coloring $f$ is denoted by $K_f$.
    \end{definition}

    \begin{remark}
        The convention for the labeling rules in Figure~\ref{fig:biquandle-coloring} is from left to right as in \cite{cho2019quandle}. That is, the labels on the left-hand side of a crossing are considered as the \textit{input} labels, and those on the right-hand side as the \textit{output} labels.
    \end{remark}

    Let $D$ be a local region of a virtual knotoid diagram $K$ where a generalized Reidemeister move is performed. The biquandle axioms guarantee that a semiarc labeling in $D$ before the move correspond to a unique semiarc labeling in $D$ after the move. Hence, the number of biquandle colorings is preserved under generalized Reidemeister moves, providing an invariant of virtual knotoids \cite{gugumcu2026biquandle}. This invariant is called the \textit{biquandle counting invariant} of virtual knotoids and is denoted by $\Phi_X^{\Z}(K)$. Thus we have,
    \[\Phi_X^{\Z}(K)=|\Col_X(K)|=|\Hom_{\mathbf{Bq}}(\mathcal{B}(K),X)|.\]

    The set of $X$-colorings of a virtual knotoid diagram $K$ can be partitioned according to the colors assigned to the tail and head semiarcs of $K$.

    \begin{definition}
        Let $X=\{x_1,\ldots,x_n\}$ be a biquandle and $K$ a virtual knotoid diagram. Let $a_t$ and $a_h$ be the generators of the fundamental biquandle $\mathcal{B}(K)$ corresponding to the tail and head semiarcs of $K$, respectively. For $i,j\in\{1,\ldots,n\}$, an \textit{$X_{ij}$-coloring} of $K$ is an $X$-coloring $\;f\in \Hom_{\mathbf{Bq}}\bigl(\mathcal B(K),X\bigr)\;$ of $K$ such that $\;f(a_t)=x_i,$ and $f(a_h)=x_j.\;$ The set of $X_{ij}$-colorings of $K$ is denoted by $\Col_X^{ij}(K)$. Thus,
        \[ \Col_X^{ij}(K)=\Hom_{\mathbf{Bq}}^{ij}\bigl(\mathcal B(K),X\bigr)=\{f\in \Hom_{\mathbf{Bq}}(\mathcal{B}(K),X)\;|\; f(a_t)=x_i, \; f(a_h)=x_j\}.
        \]
    \end{definition}

    It is clear that $\Col_X(K)=\bigsqcup_{i,j=1}^{n}\Col_X^{ij}(K),$
    and so $\Phi_X^{\mathbb Z}(K)=\sum_{i,j=1}^{n}\left|\Col_X^{ij}(K)\right|.$ The numbers of $X_{ij}$-colorings can be stored in a matrix.

    \begin{definition}
        Let $X=\{x_1,\ldots,x_n\}$ be a biquandle and $K$ a virtual knotoid diagram. The \textit{biquandle counting matrix} of $K$ is the $n\times n$ matrix $\bcm{K}=\left(m_{ij}(K)\right),$ where $m_{ij}(K)=\left|\Col_X^{ij}(K)\right|.$ That is, the $(i,j)$-entry of $\bcm{K}$ is the number of $X_{ij}$-colorings of $K$.
    \end{definition}

    It was shown in \cite{gugumcu2026biquandle} that the biquandle counting matrix is an invariant of virtual knotoids. Moreover, it is clear that
    \[
    \Phi_X^{\Z}(K)
    =
    \sum_{i,j=1}^{n}
    \bcm{K}_{i,j},
    \]
    meaning that the biquandle counting matrix is an enhancement of the biquandle counting invariant.
    
\subsubsection{Biquandle virtual brackets and virtual knotoids}
    The main idea in the construction of invariants we review in this section is to assign bracket values to the biquandle colored virtual knotoid diagrams. The assignment of such values are through biquandle virtual bracket state-sum formulas. There are twenty seven defining axioms for a biquandle virtual bracket which, all together, guarantee that the assigned values to biquandle colored diagrams are preserved under generalized Reidemeister moves. We omit these axioms here and refer the reader to \cite{nelson2019biquandle} for the full list of axioms and a detailed derivation of these axioms.
    
    \begin{definition}
        Let $X$ be a biquandle and $R$ a commutative ring with identity. A
        \emph{biquandle virtual bracket} $\beta$ over $R$ with respect to $X$
        consists of coefficient maps
        \[
        A,B,V,C,D,U:X\times X\longrightarrow R,
        \]
        and elements $\delta\in R$ and $\omega\in R^\times,$ satisfying the biquandle virtual bracket axioms given in \cite{nelson2019biquandle}. For notational convenience, we write $ F_{x,y}=F(x,y),$ where $F\in\{A,B,V,C,D,U\},$ 
    \end{definition}

    Let $K$ be a virtual knotoid diagram and $f\in\Col_X(K)$ an $X$-coloring of $K$. At each classical crossing of $K$, there are three possible local replacements: a \textit{vertical smoothing}, a \textit{horizontal smoothing}, or a \textit{virtual smoothing}, as shown in Figure~\ref{fig:smoothings}. A
    \textit{state} of $K$ is obtained by smoothing each classical crossing in $K$. Each state of a virtual knotoid diagram consists of an open segment component, and a finite number of closed components possibly containing virtual crossings.

    \begin{figure}[h!]
        \centering
        \begin{overpic}[width=0.33\linewidth]{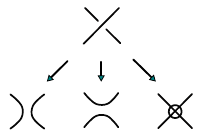}
        \end{overpic}
        \caption{Vertical, horizontal and virtual smoothings of a classical crossing from left to right.}
        \label{fig:smoothings}
    \end{figure}

    At a positive downward-oriented classical crossing with input colors $x,y$, we assign
    the coefficients
    \[
    \text{vertical smoothing}\mapsto A_{x,y},
    \qquad
    \text{horizontal smoothing}\mapsto B_{x,y},
    \qquad
    \text{virtual smoothing}\mapsto V_{x,y}.
    \]
    At a negative downward-oriented classical crossing,
    the corresponding coefficients are
    \[
    \text{vertical smoothing}\mapsto C_{x,y},
    \qquad
    \text{horizontal smoothing}\mapsto D_{x,y},
    \qquad
    \text{virtual smoothing}\mapsto U_{x,y}.
    \]
    With these assignments, the skein relations of biquandle virtual brackets are as shown in Figure~\ref{fig:x-virtual-bracket-smoothings}.

    \begin{figure}[h!]
        \centering
        \includegraphics[width=0.65\textwidth]{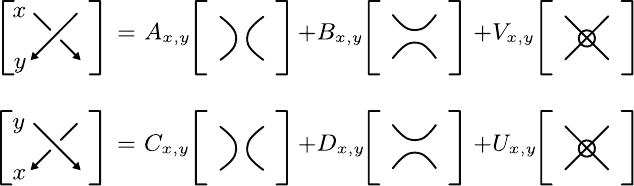}
        \vspace{0.2cm}
        \caption{Skein relations of biquandle virtual brackets.}
        \label{fig:x-virtual-bracket-smoothings}
    \end{figure}

    \begin{example}
    \label{ex: state}
    A biquandle colored virtual knotoid diagram $K$ is shown on the left-hand side of
    Figure~\ref{fig:smoothed}. On the
    right-hand side, we show one state $s$ of $K$ and the corresponding
    smoothing coefficients.
    
    \begin{figure}[h!]
        \centering
        \begin{overpic}[width=0.5\linewidth]{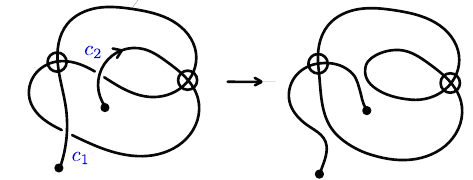}
            \put(17,15){\small $x$}
            \put(30.5,1){\small $y$}
            \put(2,15){\small $z$}
            \put(33,13){\small $s$}
            \put(9,5){\small $t$}
            \put(58,6){\scalebox{0.9}{$D_{z,s}$}}
            \put(80,20){\scalebox{0.9}{$D_{s,x}$}}
        \end{overpic}
        \caption{A biquandle colored virtual knotoid diagram and one of its states with the
        corresponding smoothing coefficients.}
        \label{fig:smoothed}
    \end{figure}
    \end{example}

    Let $S(K)$ denote the set of states of a virtual knotoid diagram $K$. For $s\in S(K),$ let $|s|$ denote the number of connected components of $s$, including
    its unique open segment component. Let $\operatorname{w}(K)$ denote the writhe of $K$. Let $f$ be an $X$-coloring of $K$. For a classical crossing $v$ in $K$, let $c_{s,f}(v)$ denote the local coefficient
    determined by the sign, input colors $x,y$ and the smoothing type of $v$ in $s$. Hence, $\;
    c_{s,f}(v)\in \{A_{x,y},B_{x,y},V_{x,y},C_{x,y},D_{x,y},U_{x,y}\}$.

    \begin{definition}
        Let $K$ be a virtual knotoid diagram, $X$ a biquandle, and
        $\beta$ a biquandle virtual bracket. For an $X$-coloring $f\in\Col_X(K),$
        the \textit{biquandle virtual bracket value} of $K_f$ is the value obtained from the state-sum
        \[
        \bvb{K_f}=\omega^{-\operatorname{w}(K)} \sum_{s\in S(K)}\left(
        \prod_{v\in C(K)}c_{s,f}(v) \right)\delta^{|s|},
        \]
        where $C(K)$ denotes the set of classical crossings of $K$.
    \end{definition}

    \begin{example}
        Consider the virtual knotoid diagram $K$ and its one state $s$ given in Example~\ref{ex: state}. The choice of smoothings at the classical crossings of $K$ produces the coefficients $D_{z,s}$ and $D_{s,x}$. The resulting state $s$ consists of two components. Hence, the contribution of $s$ to the bracket value is $D_{z,s}D_{s,x}\delta^2.$ Summing these contributions for all possible states of $K$ produces the biquandle virtual bracket value $\bvb{K_f}$.
    \end{example}

    \begin{example}
    \label{ex:kaufmann}
        Consider the biquandle $X=\{1\}$ and
        the biquandle virtual bracket $\beta$ over
        $R=\mathbb{Z}[A,A^{-1}]$ given by
        \[
            A_{1,1}=D_{1,1}=A,\quad
            B_{1,1}=C_{1,1}=A^{-1},\quad
            V_{1,1}=U_{1,1}=0,
        \]
        with $\delta=-A^2-A^{-2}$ and $\omega=-A^3$.
        Every virtual knotoid diagram $K$ admits a unique trivial $X$-coloring $f$ where each semiarc is colored by $1$. Since $V_{1,1}=U_{1,1}=0$,
        only the two classical smoothing choices contribute to the state-sum. Thus,
        \[
            \bvb{K_f}
            =
            (-A^3)^{-\operatorname{w}(K)}
            \sum_{s\in S(K)}
            A^{a(s)-b(s)}
            (-A^2-A^{-2})^{|s|},
        \]
        where $a(s)$ and $b(s)$ denote the numbers of
        smoothings with coefficients $A$ and $A^{-1}$,
        respectively. This recovers the
        Kauffman bracket with writhe correction where a factor of $\delta$ assigned
        to each state component.
    \end{example}

    Let $K$ be a virtual knotoid diagram and $\;1_{\mathcal{B}(K)}:
    \mathcal{B}(K)\longrightarrow\mathcal{B}(K)\;$ 
    denote the identity biquandle homomorphism of its fundamental biquandle. This homomorphism determines a
    $\mathcal B(K)$-coloring of $K$ where each semiarc of $K$ is colored by its corresponding generator in $\mathcal{B}(K)$. 

    \begin{definition}
        Let $K$ be a virtual knotoid diagram. The
        \textit{fundamental biquandle virtual bracket expression} of $K$, denoted by $\fbvb{K}$, is the formal biquandle virtual bracket expression obtained from the identity coloring $1_{\mathcal B(K)}$ of $K$.
    \end{definition}
    
    This formal expression is quite useful in computations. For a given biquandle $X$ and a virtual knotoid diagram $K$, suppose that the semiarcs of $K$ correspond to generators $a_1,\ldots,a_m$ of $\mathcal B(K)$, and let $f\in\Col_X(K).$ For a chosen biquandle virtual bracket $\beta$ with respect to $X$, the corresponding bracket value
    $\bvb{K_f}$ can be obtained from $\fbvb{K}$ by replacing every formal coefficient $F_{a_r,a_s}$ with $F_{f(a_r),f(a_s)},$ where $F\in\{A,B,V,C,D,U\},$ 
    and then evaluating the coefficient maps, $\delta$ and $\omega$ according to $\beta$. Thus, $\fbvb{K}$ provides a formal bracket expression for $K$ from which the bracket value associated to any $X$-coloring of $K$ for any choosen biquandle virtual bracket $\beta$ can be obtained by evaluating this formal expression accordingly.

    \begin{definition}
        Let $K$ be a virtual knotoid diagram, $X$ a finite biquandle, and
        $\beta$ a biquandle virtual bracket. The \textit{biquandle virtual bracket multiset} of $K$ is
        \[
        \bvbmult{K}=\left\{\left\{
        \bvb{K_f}
        \;\middle|\;
        f\in\Col_X(K)
        \right\}\right\}.
        \]
        That is, the biquandle virtual bracket multiset consists of biquandle virtual bracket values of all possible $X$-colorings of $K$.
    \end{definition}

    It was shown in \cite{gugumcu2026biquandle} that the biquandle virtual bracket multiset is an invariant of virtual knotoids. Its multiset cardinality is equal to the biquandle counting
    invariant, $\left|\bvbmult{K}\right|=\Phi_X^{\mathbb Z}(K),$
    so it is an enhancement of the biquandle counting invariant.
    When $R$ is a number ring, this multiset can be encoded by the polynomial 
    \[
    \Phi_X^\beta(K)=\sum_{f\in\Col_X(K)} u^{\bvb{K_f}},
    \]
    where $u$ is a formal variable. This polynomial is called the
    \textit{biquandle virtual bracket polynomial}.

    The partition of $X$-colorings of a virtual knotoid diagram $K$ as $X_{ij}$-colorings provides an enhancement of the biquandle virtual bracket multiset. For $x_i,x_j\in X$, define
    \[
    \Phi_{X_{ij}}^{\beta,M}(K)=\left\{\left\{\bvb{K_f}\;|\;f\in\Col_X^{ij}(K)\right\}\right\}.
    \]
    Since $\Col_X(K)=\bigsqcup_{i,j=1}^{n}
    \Col_X^{ij}(K),$
    we have
    \[
    \bvbmult{K}=\biguplus_{i,j=1}^{n}\Phi_{X_{ij}}^{\beta,M}(K),
    \]
    where $\biguplus$ denotes the multiset union where multiplicities of equal elements are summed. Thus, the family
    \[
    \left\{
    \Phi_{X_{ij}}^{\beta,M}(K)
    \right\}_{i,j=1}^{n}
    \]
    records the biquandle virtual bracket values for each $\Col_X^{ij}(K)\subseteq \Col_X(K)$ separately. This family of multisets is an invariant of virtual knotoids \cite{gugumcu2026biquandle}.
    
    When $R$ is a number ring, this family of multisets can be represented by matrix with polynomial entries.

    \begin{definition}
        Let $X=\{x_1,\ldots,x_n\}$ be a finite biquandle, $K$ a virtual
        knotoid diagram, and $\beta$ a biquandle virtual bracket over a
        commutative unitary number ring $R$. The
        \textit{biquandle virtual bracket matrix} of $K$ with respect to $X$ and
        $\beta$ is the $n\times n$ matrix
        \[
        \bvbm{K}=\left(\bvbm{K}_{ij}\right)_{1\leq i,j\leq n},
        \quad \text{where }\;
        \bvbm{K}_{ij}=\sum_{f\in\Col_X^{ij}(K)} u^{\bvb{K_f}}.
        \]
        Thus, the $(i,j)$-entry of this matrix records the biquandle virtual bracket polynomial of $X_{ij}$-colorings of $K$.
    \end{definition}

    It was shown in \cite{gugumcu2026biquandle} that the biquandle virtual bracket matrix is an enhancement of the biquandle counting matrix and the biquandle virtual bracket polynomial of virtual knotoids, and is strictly stronger than both.

\subsection{Pointed biquandles and categorical preliminaries}
\label{sec:boundary-pointed-pushouts}

    The presence of distinguished endpoint semiarcs in (virtual) knotoids and keeping track of information assigned to these semiarcs is quite useful in constructing strong invariants as we have seen in the previous section. A convenient way of recording endpoint information of linkoids was introduced utilizing \textit{pointed quandles} \cite{gugumcu2026pointed} and \textit{pointed biquandles}
    \cite{pflume}. We recall the notion of pointed biquandles.

    \begin{definition}
        An \textit{$n$-pointed biquandle} is an ordered tuple $(X,x_1,\ldots,x_n),$
        where $X$ is a biquandle and $x_1,\ldots,x_n$ are distinguished
        elements of $X$. An \textit{$n$-pointed biquandle homomorphism}
        \[
        f:(X,x_1,\ldots,x_n)\longrightarrow
        (X',x_1',\ldots,x_n')
        \]
        is a biquandle homomorphism $f:X\to X'$ such that $f(x_i)=x_i'$
        for all $i=1,\ldots,n$.
    \end{definition}

    The $n$-pointed biquandles and their homomorphisms form a category, denoted by
    $\mathbf{PBq}_n$.
    
    Let $L$ be a linkoid diagram that contains $n$ open components. The generators corresponding to endpoint semiarcs of $L$ in its fundamental biquandle determine distinguished
    elements. This
    leads to the construction of the $(2n)$-pointed biquandle, \[\left(
    \mathcal{B}(L),
    t_1,h_1,\ldots,t_n,h_n
    \right),\]
    where $\mathcal{B}(L)$ is the fundamental biquandle of $L$ and $t_i$ and $h_i$ are generators corresponding to the tail and head semiarcs of the
    $i$-th open component of $L$. This $(2n)$-pointed biquandle associated to $L$ is called the \textit{fundamental pointed biquandle} of $L$, and denoted by $\mathcal{PB}(L)$. $\mathcal{PB}(L)$ is an invariant of linkoids.

    We continue with recalling the notions of pushout and pullback in a category.

    \begin{definition}
        Let $\mathcal{C}$ be a category. A \textit{pushout} of a \textit{span} $\;A\xleftarrow{\,f\,}C\xrightarrow{\,g\,}B\;$ in $\mathcal{C}$ consists
        of an object $P\in \mathcal{C}$ together with morphisms
        $
        \lambda_A:A\longrightarrow P,
        \text{ and }
        \lambda_B:B\longrightarrow P
        $
        satisfying,
        \begin{itemize}
            \item $\lambda_A\circ f=\lambda_B\circ g,$
            \item the universal property: for every object $Y\in \mathcal{C}$ and morphisms $\;\varphi_A:A\longrightarrow Y, \text{ and } \varphi_B:B\longrightarrow Y$ with $ \varphi_A\circ f=\varphi_B\circ g,\;$
            there exists a unique morphism $\;\varphi:P\longrightarrow Y\;$ such that $\;\varphi\circ\lambda_A=\varphi_A, \text{ and }\varphi\circ\lambda_B=\varphi_B.$
        \end{itemize}
        Equivalently, the following diagram commutes,
        \[
        \begin{tikzcd}
        C \arrow[r,"g"] \arrow[d,"f"'] & B \arrow[d,"\lambda_B"] \\
        A \arrow[r,"\lambda_A"'] & P
        \end{tikzcd}
        \]
        and it is universal among all such commutative squares containing the span $\;A\xleftarrow{\,f\,}C\xrightarrow{\,g\,}B$.
        This pushout $P$ is denoted by $A*_{(C,f,g)}B$. 
    
        Dually, a \textit{pullback} of a \textit{cospan} $A\xrightarrow{f}C\xleftarrow{g}B$
        is an object $P$ with morphisms
        $\;P\longrightarrow A,
        \text{ and }
        P\longrightarrow B
        \;$
        satisfying the dual of the universal property of pushout. In $\mathbf{Set}$ category, the pullback is
        the \emph{fiber product}
        \[
        A\times_C B
        =
        \{(a,b)\in A\times B\mid f(a)=g(b)\},
        \]
        and $f(a)=g(b)$ is called the \textit{compatibility condition}.
    \end{definition}
    It follows from the definitions that a pushout square in a category $\mathcal{C}$ becomes a pullback square in its opposite category $\mathcal{C}^{\operatorname{op}}$.

    In~\cite{cattabriga2020}, Cattabriga and Horvat introduced  a categorical
    framework for fundamental quandles of tangles. They define a bordered quandle
    category, where the objects are finitely generated free quandles. A morphism between two objects $F_{1}$ and $F_2$ in this category is a \textit{cospan} $\;F_1\xrightarrow{i_1} Q\xleftarrow{i_2} F_2\;$ where $Q$ is a quandle and $i_1$, $i_2$ are quandle homomorphisms. If $F_1\xrightarrow{i_1} Q_1\xleftarrow{j} F
    \text{ and }
    F\xrightarrow{k} Q_2\xleftarrow{i_2} F_2$
    are two cospans, their composition is represented by
    \[
    F_{1}
    \xrightarrow{i_1}
    Q_1*_{(F,j,k)}Q_2
    \xleftarrow{i_2}
    F_{2},
    \]
    where $Q_1*_{(F,j,k)}Q_2$ is the pushout over the common boundary quandle $F$ of these two cospans.
    Under this construction, the fundamental quandle gives a functor from the
    oriented tangle category to the bordered quandle category. Thus,
    composition of tangles is realized by the pushout of their
    fundamental quandles in this bordered quandle category. 

    In our categorical constructions, we use a free biquandle on one generator.

    \begin{definition}
        A \textit{free biquandle on one generator} consists of a
        biquandle $F_1$ and an element $a\in F_1$ such that, for every biquandle $X$ and every element $x\in X$, there exists a unique biquandle homomorphism $\;\iota_x:F_1\longrightarrow X\;$ satisfying $\;\iota_x(a)=x.\;$
    \end{definition}
    Thus, choosing an element of a biquandle $X$ determines a homomorphism $F_1\rightarrow X$.

\section{Biquandle-based invariants under connected sum}
\label{sec:connected-sum-results}
    
    In this section, we begin with a
    diagrammatic description of connected sum of virtual knotoids which is convenient for the algebraic constructions
    that follow and computations of invariant values of connected sums through the values of summands.

\subsection{Connected sum of virtual knotoids via virtual connection arcs}
\label{sec:new-connected-sum}

    In \cite{turaev2012knotoids}, Turaev provides a diagrammatic construction of the connected sum through \textit{normal} knotoid diagrams. In the virtual setting, we make use of
    the detour move and provide another diagrammatic description which is applicable to any representative diagram of virtual knotoids.

    By viewing $S^2$ as $\mathbb{R}^2\cup\{\infty\}$, Turaev calls a knotoid diagram $K$ in $S^2$ \textit{normal}, if the tail of $K$ is adjacent to the region containing $\infty$. Moreoever, he shows that every knotoid diagram in $S^2$ can be turned into a normal knotoid diagram by isotopy of $S^2$. Given two normal knotoid diagrams $K_1$ and $K_2$, their connected sum $K_1\#K_2$ can be obtained by attaching a copy of $K_2$ in a small regular neighborhood of the head of $K_1$ in a way that the tail of $K_2$ is attached to the head of $K_1$ in $\mathbb{R}^2$ \cite{turaev2012knotoids}.

    Here we use the term \textit{tail-normal (t-normal)} for normal knotoid diagrams described above. Then, we define a \textit{head-normal (h-normal)} knotoid diagram analogously as a knotoid diagram whose head is adjacent to the region containing $\infty$. It is clear by Turaev's observation that if $K_1$ is \textit{h}-normal and $K_2$ is \textit{t}-normal, then $K_1\# K_2$ can be simply formed by placing the two diagrams disjointly in $\mathbb{R}^2$, and connecting the head of $K_1$ to the tail of $K_2$ by an embedded arc in $\mathbb{R}^2$ whose interior is disjoint from $K_1$ and $K_2$.

    \begin{proposition}
    \label{prop:virtual-connecting-arc}
        Let $K_1$ and $K_2$ be two virtual knotoid diagrams. Join the head of $K_1$ to the tail of $K_2$ by an embedded arc $\alpha$, such that each intersection of the interior of $\alpha$ and $K_1\cup K_2$ is transversal and declared to be a virtual crossing. The resulting virtual knotoid diagram is equivalent to the virtual knotoid diagram $K_1\# K_2$ obtained by the usual connected sum operation, independent of the choice of $\alpha$. We call $\alpha$ a \textit{virtual connection arc} (see Figure~\ref{fig:connection arc}).
    \end{proposition}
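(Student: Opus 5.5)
The plan is to reduce the statement to the definition of $K_1\#K_2$ via disks and to the detour move. We do this in two steps: first show that the diagram obtained from any virtual connection arc is independent of $\alpha$, and then exhibit one particular arc for which the result is visibly the usual connected sum.

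\emph{Independence of $\alpha$.} Let $\alpha$ and $\alpha'$ be two virtual connection arcs from $h_1$ to $t_2$, and let $D_\alpha$, $D_{\alpha'}$ be the resulting diagrams. Away from $\alpha$ and $\alpha'$, the two diagrams agree. In $D_\alpha$, the arc $\alpha$ becomes a segment of the new diagram whose only crossings are virtual. Its endpoints are, however, interior points of the diagram: the former head $h_1$ and the former tail $t_2$. The detour move needs a segment with fixed endpoints, so we enlarge $\alpha$ slightly. Choose points $p\in K_1$ just before $h_1$ and $q\in K_2$ just after $t_2$, so that the segments $[p,h_1]$ and $[t_2,q]$ contain no crossings; this is possible after shrinking. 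The segment $p\to h_1\to\alpha\to t_2\to q$ of $D_\alpha$ then has only virtual crossings.

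The detour move replaces this segment by any other segment from $p$ to $q$ all of whose intersections with the rest of the diagram are declared virtual. In particular we may take $[p,h_1]\cup\alpha'\cup[t_2,q]$, perturbed to be transverse. This transforms $D_\alpha$ into $D_{\alpha'}$. One subtlety is that the new segment crosses the old pieces of $K_1\cup K_2$ only, not itself; since $\alpha'$ is embedded, this is fine. Hence all choices give equivalent diagrams.

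\emph{Identification with the usual connected sum.} Next we pick a convenient arc and compare it with the disk definition. Take small disks $U\ni h_1$ and $V\ni t_2$ as in the definition. Removing their interiors and gluing $\partial U$ to $\partial V$ by $\varphi$ produces a sphere $S^2\#S^2\cong S^2$, which is $(S^2\setminus U)\cup(S^2\setminus V)$ glued along a circle $C$. In this sphere, $K_1$ and $K_2$ are joined at the single point $K_1\cap\partial U=K_2\cap\partial V$ on $C$.

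Now isotope this picture so that the two hemispheres become disjoint regions of one sphere. Concretely, thin the neck near $C$ so that the diagram near $C$ is a short arc crossing $C$ once. Equivalently, start from $K_1$ and $K_2$ placed in disjoint disks $\Delta_1,\Delta_2\subset S^2$; this placement is achieved by viewing $S^2\setminus \mathrm{int}\,U$ as a disk containing $K_1$, and similarly for $K_2$. Then $K_1\#K_2$ is obtained by joining $h_1\in\partial$-side to $t_2$ by an arc through the annulus between $\Delta_1$ and $\Delta_2$ that is disjoint from $K_1\cup K_2$.

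This joining arc is a virtual connection arc with no crossings at all, so the usual connected sum equals $D_\alpha$ for this particular $\alpha$. By the first step, it equals $D_\alpha$ for every $\alpha$.

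\emph{Main obstacle.} The step needing the most care is the identification in the last paragraph: making precise that the disk-gluing construction on $S^2$ yields, up to isotopy of $S^2$, two disjoint disk-contained diagrams joined by a clean arc. We handle this by describing $S^2\setminus\mathrm{int}\,U$ and $S^2\setminus\mathrm{int}\,V$ as closed disks whose boundaries are identified by $\varphi$, and by noting that the arc of $K_1\cup K_2$ near $\partial U\cong\partial V$ is radial. Pushing the gluing circle into a collar then gives the standard picture. We also note that no endpoint is moved through any strand here, so forbidden moves are never invoked; only planar isotopy and the detour move are used.
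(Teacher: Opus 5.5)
Your two steps are sound individually, but together they prove the statement for only one relative position of the two diagrams in the sphere.

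In Step~1 the detour move replaces only the segment $[p,h_1]\cup\alpha\cup[t_2,q]$. It therefore never changes which face of $S^2\setminus K_1$ contains $K_2$, nor which face of $S^2\setminus K_2$ contains $K_1$. In Step~2 you obtain the usual connected sum only in the special position coming from $S^2\setminus\mathrm{int}\,U$. There $K_2$ sits in the face of $K_1$ adjacent to $h_1$, and $K_1$ sits in the face of $K_2$ adjacent to $t_2$; in the paper's terminology, $K_1$ is $h$-normal and $K_2$ is $t$-normal.

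The proposition is meant for arbitrary given planar diagrams. The paper's proof explicitly treats the case ``$K_1$ is not $h$-normal and $K_2$ is not $t$-normal''. Take, for example, $K_1$ and $K_2$ drawn side by side with $h_1$ in a bounded face of $K_1$. Then every admissible $\alpha$ must cross $K_1$, so your crossing-free arc does not exist in that picture. Nothing in your argument relates $D_\alpha$ to the special configuration. Your closing remark, that only planar isotopy and detours of the connecting segment are used, is exactly why this case is out of reach. Some move must carry an endpoint, or a whole summand, across strands.

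The missing ingredient is the one the paper's proof is built on.
\begin{enumerate}
\item Before joining, apply virtual endpoint moves to $K_1$ and $K_2$ separately. Push $h_1$ along a path $\gamma_1$ into the face containing $K_2$, and $t_2$ along a path $\gamma_2$ into the face containing $K_1$.
\item The resulting diagrams $K_1'\cong K_1$ and $K_2'\cong K_2$ are in your special position. A crossing-free arc $\gamma$ then yields $K_1'\#K_2'$; this is where your Step~2, i.e.\ Turaev's observation, enters.
\item By well-definedness of $\#$ on equivalence classes, $K_1'\#K_2'\cong K_1\#K_2$.
\item The endpoint moves merely extended the end segments along $\gamma_1$ and $\gamma_2$ with virtual crossings. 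So, read in the original picture, the joined diagram is $K_1$ and $K_2$ connected by the virtual connection arc $\gamma_1\ast\gamma\ast\bar{\gamma}_2$.
\item Your Step~1 then gives independence of $\alpha$.
\end{enumerate}
Alternatively, staying within your detour-only toolkit, you could move the disk containing $K_2$ across an edge of $K_1$. Detour a crossing-free piece of that edge around the disk, crossing only the connecting arc, and do the same symmetrically for $K_1$ across edges of $K_2$. Either way, some such reduction step has to be added.
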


    \begin{proof}
        Any virtual knotoid diagram $K$ can be put in \textit{h}-normal or \textit{t}-normal form by taking the corresponding endpoint and moving it to the region adjacent to $\infty$ by using a sequence of virtual endpoint moves. Suppose that $K_1$ is not $h$-normal and $K_2$ is not $t$-normal. We put these diagrams in their corresponding normal forms and let $\gamma_i$ be the curve traced by the corresponding endpoint of $K_i$ from its initial position to the final position along virtual endpoint moves. We denote the resulting diagrams $K_1'$ and $K_2'$, respectively. Clearly, $K_i\cong K_i'$ for $i=1,2$. Since $K_1'$ is $h$-normal and $K_2'$ is $t$-normal, we can form their connected sum $K_1'\# K_2'$ by an embedded arc $\gamma$ whose interior is disjoint from $K_1'\cup K_2'$. By well-definedness of the connected sum, we have $K_1\#K_2\cong K_1'\#K_2'$. Moreover, by reversing the virtual endpoint moves the concatenation $\gamma_1\ast\gamma\ast\bar{\gamma_2}$, where $\bar{\gamma_2}$ is $\gamma_2$ with the reverse orientation, forms a virtual connection arc. The independence of the choice of the virtual connection arc follows from the detour move.
    \end{proof}

    \begin{figure}[h!]
        \centering
        \begin{overpic}[width=\linewidth]{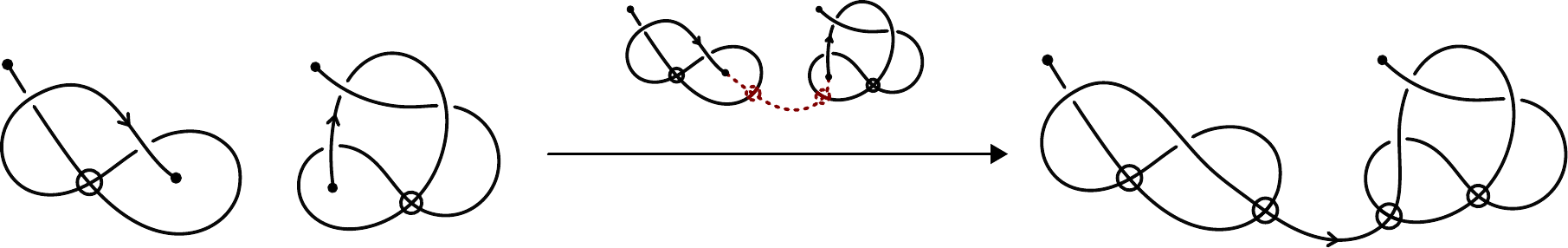}
        \put(6,-3.5){$K_1$}
        \put(24,-3.5){$K_2$}
        \put(49.5,6.9){$\alpha$}
        \put(80,-3.5){$K_1\# K_2$}
        \end{overpic}
        \vspace{0.3 cm}
        \caption{Two virtual knotoid diagrams $K_1$ and $K_2$, and their connected sum $K_1\#K_2$ obtained by using a virtual connection arc $\alpha$.}
        \label{fig:connection arc}
    \end{figure}

\subsection{Fundamental biquandles under connected sum}
    \label{sec:biquandle-behavior}

    We now study the behavior of the fundamental biquandle of virtual knotoids under connected sum categorically as a pushout in $\mathbf{Bq}$.

    Let $h_1\in\mathcal B(K_1)$ and $t_2\in\mathcal B(K_2)$ denote the generators corresponding to the head semiarc of $K_1$ and the tail semiarc of $K_2$, respectively. By the universal property of the one-generator free biquandle $F_1$, these generators determine unique homomorphisms
    \begin{align*}
        \iota_{h_1}:F_1&\rightarrow\mathcal B(K_1),&\quad \iota_{t_2}:F_1&\rightarrow\mathcal B(K_2).\\
        a&\mapsto h_1 &\quad a&\mapsto t_2
    \end{align*}
    and hence determine the following span in $\mathbf{Bq}$:
    \begin{align}
    \label{span}
        \mathcal B(K_1)\xleftarrow{\;\iota_{h_1}\;}F_1\xrightarrow{\;\iota_{t_2}\;}
        \mathcal B(K_2).
    \end{align}

    \begin{theorem}
    \label{thm:fundamental-biquandle-pushout}
        Let $K_1$ and $K_2$ be two virtual knotoid diagrams. Then
        $\mathcal B(K_1\#K_2)$ is the pushout of the span \eqref{span}
        in $\mathbf{Bq}$. That is,
        \begin{align}
        \label{isom}
        \mathcal{B}(K_1\#K_2)
        \cong
        \mathcal{B}(K_1)*_{F_1}\mathcal{B}(K_2).
        \end{align}
    \end{theorem}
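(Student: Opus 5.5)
We will realize $K_1\#K_2$ by a virtual connection arc, as in Proposition~\ref{prop:virtual-connecting-arc}, and compare presentations. Let $\alpha$ join the head of $K_1$ to the tail of $K_2$. Since the fundamental biquandle is an invariant of virtual knotoids, we may compute $\mathcal B(K_1\#K_2)$ from this diagram. Its classical crossings are exactly those of $K_1$ and of $K_2$, and all crossings created by $\alpha$ are virtual. Virtual crossings only carry a label straight through (Figure~\ref{fig:biquandle-coloring}), so they impose no relation beyond identifying the labels of the two pieces of a strand. After collapsing these identifications, the semiarcs of the new diagram are the semiarcs of $K_1$ and of $K_2$, with one exception: the head semiarc of $K_1$, the arc $\alpha$, and the tail semiarc of $K_2$ merge into a single semiarc. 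Here we use that $\alpha$ begins and ends at endpoints adjacent to these semiarcs. The relations at classical crossings are precisely $R(K_1)\cup R(K_2)$, with the labels $h_1$ and $t_2$ replaced by the common label. Hence
\[
\mathcal B(K_1\#K_2)\cong\langle L(K_1)\sqcup L(K_2)\;|\;R(K_1)\cup R(K_2)\cup\{h_1=t_2\}\rangle .
\]

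Next we verify directly that this presented biquandle $P$ satisfies the universal property of the pushout of the span \eqref{span}. The structure maps $\lambda_i:\mathcal B(K_i)\to P$ send each generator to its class. They are well defined because the relations $R(K_i)$ hold in $P$. Moreover $\lambda_1\circ\iota_{h_1}$ and $\lambda_2\circ\iota_{t_2}$ agree on the generator $a$ of $F_1$, since $h_1=t_2$ in $P$. By the uniqueness clause in the definition of $F_1$, the two composites are therefore equal.

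Now let $\varphi_i:\mathcal B(K_i)\to Y$ satisfy $\varphi_1\iota_{h_1}=\varphi_2\iota_{t_2}$. Evaluating at $a$ gives $\varphi_1(h_1)=\varphi_2(t_2)$. Define $\varphi$ on the generators of $P$ by $\varphi_1$ on $L(K_1)$ and $\varphi_2$ on $L(K_2)$. Every defining relation of $P$ is respected: the relations $R(K_i)$ because each $\varphi_i$ is a homomorphism, and $h_1=t_2$ by the computation just made. So $\varphi$ extends uniquely to a homomorphism $P\to Y$, by the universal property of presented biquandles (free biquandle modulo the congruence generated by the relations). Uniqueness of $\varphi$ follows because $P$ is generated by the images of $\lambda_1$ and $\lambda_2$. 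Since a pushout is unique up to isomorphism, this gives \eqref{isom}.

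We expect the main obstacle to be the first step: making rigorous the claim that the virtual connection arc contributes exactly one identification and nothing else. This requires care about how semiarcs are counted across virtual crossings, since the labelling convention treats virtual crossings as pass-throughs. It also requires the degenerate cases where $K_1$ or $K_2$ has no classical crossings; then the whole summand is a single semiarc, and $\mathcal B(K_i)\cong F_1$, so the pushout collapses correctly. We would handle this by noting that labels separated only by virtual crossings are identified in $\mathcal B(K)$ anyway, so whether such pieces count as one semiarc or several does not change the presented biquandle up to isomorphism. A secondary point is that presentations of biquandles enjoy the standard universal property. This holds because $\mathbf{Bq}$ is a variety of algebras once the inverse operations witnessing the bijectivity in axiom ii are adjoined, and homomorphisms preserve these inverses automatically.
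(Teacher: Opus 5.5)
Your proposal is correct and takes essentially the same route as the paper. Both arguments realize $K_1\#K_2$ with a virtual connection arc, observe that the only change to the presentation is the identification $h_1=t_2$, and then verify the pushout universal property generator by generator; your extra remarks (uniqueness via generation by the images of $\lambda_1,\lambda_2$, virtual crossings as pass-throughs, the degenerate crossingless case, and the variety-theoretic justification of presentations) only make explicit points the paper leaves implicit.
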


    \begin{proof}
        Let
        $\;
        \mathcal B(K_1)=\langle L_1\mid R_1\rangle$, and $
        \mathcal B(K_2)=\langle L_2\mid R_2\rangle.\;$
        Consider the connected sum $K_1\# K_2$ obtained by a virtual connection arc as in Proposition~\ref{prop:virtual-connecting-arc}. Since a virtual connection arc introduces no new classical crossing, and hence no new relations, its only effect on the fundamental biquandle presentation is to
        identify the generators $h_1$ and $t_2$ corresponding to the head semiarc of $K_1$ and the tail semiarc of $K_2$, respectively. That is,
        \[
        \mathcal B(K_1\#K_2)
        \cong
        \left\langle
        L_1\sqcup L_2
        \;\middle|\;
        R_1\cup R_2\cup\{h_1=t_2\}
        \right\rangle.
        \]
        Let
        $
        \lambda_1:\mathcal B(K_1)\longrightarrow\mathcal B(K_1\#K_2)$ and $
        \lambda_2:\mathcal B(K_2)\longrightarrow\mathcal B(K_1\#K_2)$
        be the canonical homomorphisms. By construction,
        $ (\lambda_1\circ\iota_{h_1})(a) =\lambda_1(h_1) =\lambda_2(t_2) =(\lambda_2\circ\iota_{t_2})(a). $
        It follows from the defining universal property of $F_1$ that, 
        $\lambda_1\circ\iota_{h_1}=\lambda_2\circ\iota_{t_2}.$ 
        
        Now let $Y$ be a biquandle and suppose that there exist $\;f_1:\mathcal B(K_1)\longrightarrow Y$ and $f_2:\mathcal B(K_2)\longrightarrow Y\;$ such that $f_1\circ\iota_{h_1}=f_2\circ\iota_{t_2}.$ Since $F_1$ is free on one generator, this condition is equivalent to $f_1(h_1)=f_2(t_2).$ We define \(f\) on the generators of \(\mathcal B(K_1\#K_2)\) by
        $$ f(l)= \begin{cases} f_1(l),&l\in L_1,\\ f_2(l),& l\in L_2. \end{cases} $$
        Since \(f_1\) and \(f_2\) respect the relations \(R_1\) and \(R_2\), respectively, and \(f_1(h_1)=f_2(t_2)\), this assignment respects the relations of \(\mathcal B(K_1\#K_2)\). Thus, it determines a unique biquandle homomorphism
        \[
        f:\mathcal B(K_1\#K_2)\longrightarrow Y
        \]
        which satisfies $\;f\circ\lambda_1=f_1$, and $f\circ\lambda_2=f_2.\;$ Hence $\mathcal{B}(K_1\#K_2)$ satisfies the universal property of the
        pushout.
    \end{proof}

    Now we proceed by defining the pointed fundamental biquandles of virtual knotoids analogous to that in linkoid setting in \cite{pflume} and examine their behavior under connected sum in a similar way to the fundamental biquandles.

    \begin{definition}
        Let $K$ be a virtual knotoid diagram. The \textit{fundamental pointed
        biquandle} of $K$ is the $2$-pointed biquandle
        \[
        \mathcal{PB}(K)=\left(\mathcal B(K),t,h\right),
        \]
        where $t$ and $h$ are the generators of $\mathcal B(K)$ corresponding to the
        tail and head semiarcs of $K$, respectively.
    \end{definition}        

    \begin{proposition}
        The isomorphism class of $\mathcal{PB}(K)$ is an invariant of virtual knotoids.
    \end{proposition}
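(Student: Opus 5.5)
The plan is to show that a single generalized Reidemeister move, performed in a local disk $D$, transforms $\mathcal{PB}(K)$ into an isomorphic $2$-pointed biquandle. Invariance under isotopies of $S^2$ is immediate, because the presentation $\langle L(K)\mid R(K)\rangle$ and the labels of the tail and head semiarcs do not change. The model is the invariance argument for $\mathcal B(K)$ in \cite{gugumcu2026biquandle}. For each move I will build the usual Tietze-type isomorphism
\[
\Psi:\mathcal B(K)\longrightarrow \mathcal B(K'),
\]
where $K'$ is the diagram after the move. $\Psi$ sends each generator of a semiarc outside $D$ to the generator of the same semiarc in $K'$. It sends each generator of a semiarc inside $D$ to the word in the boundary generators forced by the crossing relations. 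Its inverse is defined the same way, and the biquandle axioms guarantee that the relations are preserved in both directions. All that remains is to check that $\Psi(t)=t'$ and $\Psi(h)=h'$.

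The proof then splits into two cases.

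In the first case, the disk $D$ contains no endpoint. Then the tail and head semiarcs meet $D$, if at all, only in their ends. The portion of each such semiarc outside $D$ survives, so it is still labeled by a boundary generator that $\Psi$ fixes. Hence $\Psi(t)=t'$ and $\Psi(h)=h'$ hold automatically.

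In the second case, $D$ contains an endpoint. Since the forbidden move of Figure~\ref{fig:forbidden} is excluded, the only generalized move involving an endpoint is the virtual endpoint move. That move creates or deletes only a virtual crossing next to the endpoint. At a virtual crossing the labels pass straight through (Figure~\ref{fig:biquandle-coloring}) and no relation is imposed. So the endpoint semiarc before the move and the one after the move carry the same generator. The transverse strand likewise keeps a single label across the virtual crossing. The presentation is therefore literally unchanged, and $\Psi$ is the identity on generators, matching $t$ with $t'$ and $h$ with $h'$.

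Combining the two cases, each move yields an isomorphism $\mathcal{PB}(K)\cong\mathcal{PB}(K')$ in $\mathbf{PBq}_2$. Composing these along a sequence of moves proves the proposition.

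I expect the main obstacle to be the bookkeeping in the first case. Some moves, such as the Reidemeister~I move next to an endpoint semiarc, change which segment counts as the tail or head semiarc. I need to confirm that, under $\Psi$, the generator of the new endpoint semiarc is identified with the old one. My plan is to note that the endpoint itself lies outside $D$, so the piece of the semiarc that contains the endpoint is untouched. Its label therefore lies among the boundary labels that $\Psi$ fixes, which is exactly how $\Psi$ was defined.
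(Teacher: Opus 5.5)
Your proposal is correct and follows the paper's route: moves away from the endpoints are handled by the move-induced isomorphism of $\mathcal B(K)$, which fixes the endpoint generators, and the virtual endpoint move is handled by the fact that labels pass unchanged through a virtual crossing. Your insistence on the specific boundary-fixing Tietze isomorphism $\Psi$, rather than an arbitrary isomorphism of fundamental biquandles as the paper's last sentence loosely suggests, is a worthwhile sharpening. One claim needs adjusting: endpoint semiarcs need not meet $D$ only in their ends, since a crossing-free strand can run through $D$. You do not need that claim, because your argument only uses that the piece containing the endpoint lies outside $D$.
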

    
    \begin{proof}
        The fundamental biquandle is invariant under generalized Reidemeister moves. That is, if $K_1$ and $K_2$ are related by a generalized Reidemeister move, then their fundamental biquandles are isomorphic. The virtual endpoint move does not change the generator assigned to the corresponding endpoint semiarc as the labelings remain unchanged through a virtual crossing. The remaining moves are performed away from the endpoints of a virtual knotoid diagram, and hence they preserve the generators corresponding to the tail and head semiarcs of the diagram. Thus, an isomorphism between fundamental biquandles of $K_1$ and $K_2$ induces an isomorphism between their pointed fundamental biquandles.
    \end{proof}

    \begin{corollary}
        Let $K_1$ and $K_2$ be two virtual knotoid diagrams and consider their pointed fundamental biquandles $\mathcal{PB}(K_1)=\bigl(\mathcal B(K_1),t_1,h_1\bigr)$ and $\mathcal{PB}(K_2)=\bigl(\mathcal B(K_2),t_2,h_2\bigr).$ Then,
        \[
        \mathcal{PB}(K_1\#K_2)
        \cong
        \left(
        \mathcal B(K_1)*_{F_1}\mathcal B(K_2),
        \lambda_1(t_1),
        \lambda_2(h_2)
        \right),
        \]
        where $\lambda_i:\mathcal{B}(K_i)\rightarrow \mathcal{B}(K_1\# K_2)$ is the natural homomorphism.
    \end{corollary}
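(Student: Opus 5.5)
The plan is to combine Theorem~\ref{thm:fundamental-biquandle-pushout} with the explicit presentation obtained from a virtual connection arc, and then check that the distinguished tail and head generators match. First, realize $K_1\#K_2$ by a virtual connection arc $\alpha$ joining the head of $K_1$ to the tail of $K_2$, as in Proposition~\ref{prop:virtual-connecting-arc}. By the proof of Theorem~\ref{thm:fundamental-biquandle-pushout},
\[
\mathcal B(K_1\#K_2)\cong\left\langle L_1\sqcup L_2\;\middle|\;R_1\cup R_2\cup\{h_1=t_2\}\right\rangle ,
\]
and this presented biquandle, with the canonical maps $\lambda_1,\lambda_2$, is a pushout of the span \eqref{span}. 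Since pushouts are unique up to unique isomorphism compatible with the structure maps, the isomorphism \eqref{isom} may be chosen to commute with $\lambda_1$ and $\lambda_2$. Transporting distinguished points along it is therefore legitimate.

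Second, I will identify the tail and head semiarcs of the diagram $K_1\#K_2$ built with $\alpha$. The tail of $K_1\#K_2$ is the tail of $K_1$. The semiarc containing it is the tail semiarc of $K_1$, because crossings of $\alpha$ with $K_1\cup K_2$ are virtual and do not split semiarcs for labeling purposes. Its generator in the presentation is therefore the image of $t_1\in L_1$, namely $\lambda_1(t_1)$. Symmetrically, the head of $K_1\#K_2$ is the head of $K_2$, and its semiarc generator is $\lambda_2(h_2)$.

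Third, there is one case to check. If $K_1$ has no classical crossing, its tail and head semiarcs coincide, so $t_1=h_1$. The same holds for $K_2$. In that case the merged semiarc through $\alpha$ contains the tail or head of the sum. The relation $h_1=t_2$ still gives $\lambda_1(t_1)=\lambda_1(h_1)=\lambda_2(t_2)$, so the formula is consistent.

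Finally, by the proposition above, $\mathcal{PB}(K_1\#K_2)$ is well defined up to isomorphism independently of the chosen representative. Using the $\alpha$-diagram, the pointed isomorphism follows. I expect the only delicate point to be the compatibility of the isomorphism \eqref{isom} with the maps $\lambda_i$, which the universal property handles directly.
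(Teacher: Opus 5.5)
Your proposal is correct and follows the paper's argument. You realize the sum by a virtual connection arc, invoke the pushout isomorphism of Theorem~\ref{thm:fundamental-biquandle-pushout}, and observe that the tail and head generators of $K_1\#K_2$ are $\lambda_1(t_1)$ and $\lambda_2(h_2)$. Your extra remarks (that the isomorphism can be taken compatible with the $\lambda_i$ by uniqueness of pushouts, and the consistency check when a summand has no classical crossings) make explicit what the paper's one-line proof leaves implicit.
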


    \begin{proof}
        Under the isomorphism \eqref{isom} of Theorem~\ref{thm:fundamental-biquandle-pushout}, the distinguished generators corresponding to tail and
        head semiarcs of $K_1\#K_2$ in its fundamental biquandle are the images of the generators $t_1$ and
        $h_2$ under the natural homomorphisms $\lambda_1$ and $\lambda_2$, respectively. 
    \end{proof}

    Pointed fundamental biquandles also provide an interpretation of the partitioning of biquandle colorings of virtual knotoids according to the fixed endpoint semiarc colors.

    \begin{proposition}
    \label{prop:pointed-colorings}
        Let $K$ be a virtual knotoid diagram with tail and head semiarcs $t$ and $h$, respectively. If $X=\{x_1,\ldots,x_n\}$ is a biquandle, then for every $i,j\in\{1,\ldots,n\}$,
        \[
        \operatorname{Col}_X^{ij}(K)
        \cong
        \operatorname{Hom}_{\mathbf{PBq}_2}
        \bigl(
        \mathcal{PB}(K),(X,x_i,x_j)
        \bigr).
        \]
    \end{proposition}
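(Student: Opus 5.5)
The claim is essentially definitional, so the plan is to unwind both sides and show they are literally the same set of maps. By definition, $\Col_X^{ij}(K)=\Hom^{ij}_{\mathbf{Bq}}(\mathcal B(K),X)$ is the set of biquandle homomorphisms $f:\mathcal B(K)\to X$ with $f(a_t)=x_i$ and $f(a_h)=x_j$. Here $a_t,a_h$ are the generators of the tail and head semiarcs, which are exactly the distinguished elements $t,h$ of $\mathcal{PB}(K)=(\mathcal B(K),t,h)$. On the other side, a morphism $\mathcal{PB}(K)\to(X,x_i,x_j)$ in $\mathbf{PBq}_2$ is, by the definition of $n$-pointed biquandle homomorphism with $n=2$, a biquandle homomorphism $f:\mathcal B(K)\to X$ with $f(t)=x_i$ and $f(h)=x_j$.

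The key steps are therefore as follows. First, write down the map $\Phi:\Col_X^{ij}(K)\to\Hom_{\mathbf{PBq}_2}(\mathcal{PB}(K),(X,x_i,x_j))$ sending $f$ to $f$ viewed as a pointed morphism. It is well defined because the pointing conditions are exactly the conditions $f(a_t)=x_i$ and $f(a_h)=x_j$. Second, write down the inverse, the forgetful assignment sending a pointed homomorphism to its underlying biquandle homomorphism. It lands in $\Col_X^{ij}(K)$ for the same reason. Third, observe that the two maps are mutually inverse, since both are the identity on underlying functions. Hence the two sets coincide, and in particular are in bijection.

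The only point needing care is to confirm that the isomorphism class of $\mathcal{PB}(K)$, rather than the chosen diagram, is what matters. A pointed isomorphism $\mathcal{PB}(K)\cong\mathcal{PB}(K')$ induces, by precomposition, a bijection of the pointed Hom sets. This is consistent with the invariance of $\Col_X^{ij}$, and it uses the preceding proposition that $\mathcal{PB}(K)$ is a virtual knotoid invariant. No real obstacle is expected: the statement follows immediately from matching the definitions.
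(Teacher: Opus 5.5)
Your proposal is correct and matches the paper's proof, which likewise observes that a morphism $\mathcal{PB}(K)\to(X,x_i,x_j)$ in $\mathbf{PBq}_2$ is by definition a biquandle homomorphism $f:\mathcal B(K)\to X$ with $f(t)=x_i$ and $f(h)=x_j$, that is, an $X_{ij}$-coloring of $K$. Your final paragraph on invariance under change of diagram is harmless but not needed, since the statement concerns a fixed diagram $K$.
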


    \begin{proof}
        A homomorphism $f:\mathcal{PB}(K)\longrightarrow (X,x_i,x_j)$ of $2$-pointed biquandles is a biquandle homomorphism $f:\mathcal B(K)\longrightarrow X$ satisfying $f(t)=x_i,\;f(h)=x_j,$ which is precisely an $X_{ij}$-coloring of $K$.
    \end{proof}

    We now turn back to the consequences of the pushout description for biquandle colorings.

    \begin{proposition}
    \label{prop:compatible-colorings}
        Let $K_1$ and $K_2$ be two virtual knotoid diagrams, and $X$ a
        biquandle. Consider the endpoint evaluation maps
        \begin{align*}
            \operatorname{ev}_h\colon \Col_X(K_1)&\longrightarrow X,&\quad \operatorname{ev}_t\colon \Col_X(K_2)&\longrightarrow X,\\
            f&\longmapsto f(h_1),&\quad f&\longmapsto f(t_2).
        \end{align*}
        Then, there is a natural bijection
        $
        \Col_X(K_1\#K_2)
        \cong
        \Col_X(K_1)\times_X\Col_X(K_2),
        $
        where the fiber product is taken with respect to
        $\operatorname{ev}_h$ and $\operatorname{ev}_t$. That is,
        \[
        \Col_X(K_1\#K_2)
        \cong
        \left\{
        (f_1,f_2)\in
        \Col_X(K_1)\times\Col_X(K_2)
        \;\middle|\;
        f_1(h_1)=f_2(t_2)
        \right\}.
        \]
    \end{proposition}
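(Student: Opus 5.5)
The plan is to apply the contravariant functor $\Hom_{\mathbf{Bq}}(-,X)\colon \mathbf{Bq}\to\mathbf{Set}$ to the pushout square of Theorem~\ref{thm:fundamental-biquandle-pushout}. Any representable contravariant functor sends colimits in $\mathbf{Bq}$ to limits in $\mathbf{Set}$. In particular, it sends the pushout $\mathcal B(K_1)*_{F_1}\mathcal B(K_2)$ to the pullback of
\[
\Hom_{\mathbf{Bq}}(\mathcal B(K_1),X)\xrightarrow{\;\iota_{h_1}^*\;}\Hom_{\mathbf{Bq}}(F_1,X)\xleftarrow{\;\iota_{t_2}^*\;}\Hom_{\mathbf{Bq}}(\mathcal B(K_2),X).
\]
The isomorphism \eqref{isom} of Theorem~\ref{thm:fundamental-biquandle-pushout} then gives $\Col_X(K_1\#K_2)\cong \Col_X(K_1)\times_{\Hom(F_1,X)}\Col_X(K_2)$.

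The second step identifies $\Hom_{\mathbf{Bq}}(F_1,X)$ with $X$. By the definition of the free biquandle on one generator, the evaluation map $\phi\mapsto\phi(a)$ is a bijection with inverse $x\mapsto\iota_x$. Under this identification, $\iota_{h_1}^*(f_1)=f_1\circ\iota_{h_1}$ corresponds to $f_1(\iota_{h_1}(a))=f_1(h_1)=\operatorname{ev}_h(f_1)$. Likewise, $\iota_{t_2}^*(f_2)$ corresponds to $\operatorname{ev}_t(f_2)$. The fiber product over $\Hom(F_1,X)$ is therefore exactly the fiber product over $X$ with respect to $\operatorname{ev}_h$ and $\operatorname{ev}_t$. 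Concretely, it is the set of pairs $(f_1,f_2)$ with $f_1(h_1)=f_2(t_2)$.

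To keep the argument self-contained, and to make the bijection explicit, I would also write the maps down directly rather than quote the general fact. In one direction, $f\mapsto (f\circ\lambda_1,f\circ\lambda_2)$ lands in the fiber product because $\lambda_1(h_1)=\lambda_2(t_2)$. In the other direction, a compatible pair $(f_1,f_2)$ satisfies $f_1\circ\iota_{h_1}=f_2\circ\iota_{t_2}$ by uniqueness in the universal property of $F_1$. The universal property of the pushout then supplies a unique $f$ with $f\circ\lambda_i=f_i$. Existence gives surjectivity of the first map and uniqueness gives injectivity, so the two maps are mutually inverse. Naturality in $X$ comes from functoriality: for a biquandle homomorphism $g\colon X\to X'$, postcomposition with $g$ commutes with restriction along $\lambda_i$ and with evaluation at $h_1$ and $t_2$.

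I do not expect a serious obstacle here. The only point needing care is translating the compatibility condition $f_1\circ\iota_{h_1}=f_2\circ\iota_{t_2}$ into the element condition $f_1(h_1)=f_2(t_2)$. This requires uniqueness in the free-biquandle universal property, so that homomorphisms out of $F_1$ are determined by their value on $a$.
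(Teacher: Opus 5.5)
Your proposal is correct and follows essentially the same route as the paper. Both arguments apply $\Hom_{\mathbf{Bq}}(-,X)$ to the pushout square of Theorem~\ref{thm:fundamental-biquandle-pushout} to get a pullback in $\mathbf{Set}$, then identify $\Hom_{\mathbf{Bq}}(F_1,X)\cong X$ by evaluation at $a$, so that the compatibility condition becomes $f_1(h_1)=f_2(t_2)$; your explicit mutually inverse maps and your remark on naturality in $X$ simply unpack the universal properties that the paper invokes abstractly.
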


    \begin{proof}
        Recall that $\Col_X(K)=\Hom_{\mathbf{Bq}}(\mathcal B(K),X).$ For the fixed biquandle $X$, consider the functor $\Hom_{\mathbf{Bq}}(-,X):\mathbf{Bq}^{\mathrm{op}}\longrightarrow\mathbf{Set}.$
        The pushout square in Theorem~\ref{thm:fundamental-biquandle-pushout}
        \[
        \begin{tikzcd}[column sep=large,row sep=large]
        F_1
            \arrow[r,"\iota_{t_2}"]
            \arrow[d,"\iota_{h_1}"']
        &
        \mathcal B(K_2)
            \arrow[d,"\lambda_2"]
        \\
        \mathcal B(K_1)
            \arrow[r,"\lambda_1"']
        &
        \mathcal B(K_1\#K_2)
        \end{tikzcd}
        \]
        is a pullback square in $\mathbf{Bq}^{\mathrm{op}}$. Applying
        $\Hom_{\mathbf{Bq}}(-,X)$ to this square results in the pullback square
        \[
        \begin{tikzcd}[column sep=large,row sep=large]
        \Hom_{\mathbf{Bq}}(\mathcal{B}(K_1\#K_2),X)
            \arrow[r,"\lambda_1^{*}"]
            \arrow[d,"\lambda_2^{*}"']
        &
        \Hom_{\mathbf{Bq}}(\mathcal{B}(K_1),X)
            \arrow[d,"\iota_{h_1}^{*}"]
        \\
        \Hom_{\mathbf{Bq}}(\mathcal{B}(K_2),X)
            \arrow[r,"\iota_{t_2}^{*}"']
        &
        \Hom_{\mathbf{Bq}}(F_1,X).
        \end{tikzcd}
        \]
        in $\mathbf{Set}$ category. Hence we have, 
        \begin{align}
        \label{hombiject}
            \Hom_{\mathbf{Bq}}(\mathcal{B}(K_1\#K_2),X)\cong\Hom_{\mathbf{Bq}}(\mathcal{B}(K_1),X)\times_{\Hom_{\mathbf{Bq}}(F_1,X)}\Hom_{\mathbf{Bq}}(\mathcal{B}(K_2),X).
        \end{align}
        First, observe that evaluation at the single generator $a$ of $F_1$ gives the following bijection
        \[
        \begin{aligned}
        \Hom_{\mathbf{Bq}}(F_1,X)
        &\xrightarrow{\;\cong\;} X,\\
        g&\longmapsto g(a).
        \end{aligned}
        \]
        Secondly, by the compatibility condition, for two $X$-colorings $f_1\in \Hom_{\mathbf{Bq}}(\mathcal{B}(K_1),X)$ and $f_2\in \Hom_{\mathbf{Bq}}(\mathcal{B}(K_2),X)$,  we have $\iota_{h_1}^\ast(f_1)=\iota_{t_2}^\ast(f_2)$ and hence $f_1\circ\iota_{h_1}=f_2\circ\iota_{t_2}$. Since $\iota_{h_1}(a)=h_1$ and $\iota_{t_2}(a)=t_2$, we see that the compatibility condition of the pullback becomes $f_1(h_1)=f_2(t_2)$. These two observations together show that the bijection \eqref{hombiject} induces a bijection
        \[
        \Col_X(K_1\#K_2)
        \cong
        \Col_X(K_1)\times_X\Col_X(K_2),
        \]
        with the pullback compatibility condition $f_1(h_1)=f_2(t_2).$
    \end{proof}

    Let $X=\{x_1,\ldots,x_n\}$ be a biquandle. For
    $i,k\in\{1,\ldots,n\}$, Proposition~\ref{prop:compatible-colorings} implies that
    \begin{align}
    \label{eq:coloringbij}
        \Col_X^{ik}(K_1\#K_2)\cong\bigsqcup_{j=1}^{n}\left(\Col_X^{ij}(K_1)\times\operatorname{Col}_X^{jk}(K_2)\right).
    \end{align}
    Equivalently, using Proposition~\ref{prop:pointed-colorings}, we obtain
    \[
    \begin{aligned}
    \operatorname{Col}_X^{ik}(K_1\#K_2)
    \cong
    \bigsqcup_{j=1}^{n}
    \left(\operatorname{Hom}_{\mathbf{PBq}_2}
    \left(
    \mathcal{PB}(K_1),(X,x_i,x_j)
    \bigr)
    \times
    \operatorname{Hom}_{\mathbf{PBq}_2}
    \bigl(
    \mathcal{PB}(K_2),(X,x_j,x_k)
    \right)\right).
    \end{aligned}
    \]

    The following connected-sum formula for biquandle counting matrices of virtual knotoids was obtained in \cite{gugumcu2026biquandle}, and it follows mainly from the bijection \eqref{eq:coloringbij}. Hence, Proposition~\ref{prop:compatible-colorings} provides a categorical explanation of this formula.

    \begin{corollary}
        Let $K_1$ and $K_2$ be two virtual knotoid diagrams and
        $X=\{x_1,\ldots,x_n\}$ a finite biquandle. Then
        \[
        \bcm{K_1\#K_2}=\bcm{K_1}\bcm{K_2}.
        \]
    \end{corollary}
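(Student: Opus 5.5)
The plan is to compute the $(i,k)$-entry of $\bcm{K_1\#K_2}$ directly from the bijection \eqref{eq:coloringbij}, which itself comes from Proposition~\ref{prop:compatible-colorings}. All sets involved are finite because $X$ is finite: $\Col_X^{ij}(K)$ is a subset of the set of maps from the finite generating set $L(K)$ to $X$. An $X$-coloring is determined by its values on generators, so this inclusion is injective.

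First I will confirm that \eqref{eq:coloringbij} really refines Proposition~\ref{prop:compatible-colorings} to the endpoint-colored pieces. This needs care about which generators serve as tail and head of $K_1\#K_2$.
\begin{itemize}
\item By the Corollary on pointed fundamental biquandles, the tail generator of $K_1\#K_2$ is $\lambda_1(t_1)$ and the head generator is $\lambda_2(h_2)$.
\item Under the bijection $f\mapsto(f\circ\lambda_1,f\circ\lambda_2)=(f_1,f_2)$, we therefore have $f(\text{tail})=f_1(t_1)$ and $f(\text{head})=f_2(h_2)$.
\item So $f\in\Col_X^{ik}(K_1\#K_2)$ exactly when $f_1(t_1)=x_i$ and $f_2(h_2)=x_k$.
\item The compatibility condition $f_1(h_1)=f_2(t_2)$ forces a unique common value, say $x_j$.
\end{itemize}
Sorting the compatible pairs by this index $j$ gives the disjoint union over $j$ of $\Col_X^{ij}(K_1)\times\Col_X^{jk}(K_2)$. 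The union is disjoint because $j$ is determined by $f_1(h_1)$.

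Next I take cardinalities:
\[
m_{ik}(K_1\#K_2)=\sum_{j=1}^n \left|\Col_X^{ij}(K_1)\right|\left|\Col_X^{jk}(K_2)\right|=\sum_{j=1}^n m_{ij}(K_1)\,m_{jk}(K_2).
\]
This is exactly the $(i,k)$-entry of the matrix product $\bcm{K_1}\bcm{K_2}$, which proves the equality.

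I expect the only real obstacle to be the first step: checking that the endpoint generators of the connected sum correspond to $\lambda_1(t_1)$ and $\lambda_2(h_2)$ under the isomorphism \eqref{isom}, so that the partition by endpoint colors is compatible with the fiber-product bijection. This follows from the construction of $K_1\#K_2$ with a virtual connection arc (Proposition~\ref{prop:virtual-connecting-arc}). In that construction the tail semiarc of $K_1$ and the head semiarc of $K_2$ remain the endpoint semiarcs of the sum, and the virtual crossings along $\alpha$ do not change their labels. After that, the argument is pure counting.
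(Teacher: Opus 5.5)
Your proposal is correct and follows the paper's proof. Like the paper, you take the endpoint-refined bijection \eqref{eq:coloringbij} coming from Proposition~\ref{prop:compatible-colorings} and count both sides; your check that the tail and head generators of $K_1\#K_2$ are $\lambda_1(t_1)$ and $\lambda_2(h_2)$ spells out the refinement step that the paper leaves implicit.
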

    
    \begin{proof}
        For fixed $i,k$, by Proposition~\ref{prop:compatible-colorings}, we have
        \[
        \operatorname{Col}_X^{ik}(K_1\#K_2)
        \cong
        \bigsqcup_{j=1}^{n}
        \left(
        \operatorname{Col}_X^{ij}(K_1)
        \times
        \operatorname{Col}_X^{jk}(K_2)
        \right).
        \]
        Taking cardinalities of both sides gives
        \[
        \begin{aligned}
        m_{ik}(K_1\#K_2)
        &=
        \sum_{j=1}^{n}
        \left|
        \operatorname{Col}_X^{ij}(K_1)
        \right|
        \left|
        \operatorname{Col}_X^{jk}(K_2)
        \right| =
        \sum_{j=1}^{n}
        m_{ij}(K_1)m_{jk}(K_2),
        \end{aligned}
        \]
        which is the $(i,k)$-entry of
        $\bcm{K_1}\bcm{K_2}$.
    \end{proof}

\subsection{Biquandle virtual bracket invariants under connected sum}
\label{sec:bracket-behavior}

    In this section, we study the behavior of \textit{normalized} biquandle virtual bracket invariants of virtual knotoids under connected sum.
\subsubsection{The normalized biquandle virtual bracket invariant}

    The Kauffman bracket has two closely related state-sum formulations. These formulations differ in the component factor assigned to each state $s$. Tne first one uses $\delta^{|s|}$, as in the formulation given in Example~\ref{ex:kaufmann}, while the other one uses $\delta^{|s|-1}$ in the formula. Turaev showed in~\cite{turaev2012knotoids} that
    the Kauffman bracket of knotoids is multiplicative under connected sum when the latter convention is used. Since every state of a virtual knotoid diagram contains a unique open component, the exponent $|s|-1$ counts exactly the number of closed components in $s$.

    Motivated by this observation, we introduce a one-component-reduced
    normalization of the biquandle virtual bracket state-sum.

    \begin{definition}
        Let $K$ be a virtual knotoid diagram, $X$ a biquandle, and $\beta$ a biquandle virtual bracket. For an $X$-coloring $f\in\operatorname{Col}_X(K),$
        the \textit{normalized biquandle virtual bracket value} of $K_f$ is the value obtained by the state-sum formula
        \[
        \bvbn{K_f}=\omega^{-\operatorname{wr}(K)}\sum_{s\in\mathcal{S}(K)}\left(\prod_{v\in C(K)} c_{s,f}(v)
        \right)\delta^{|s|-1}.
        \]
    \end{definition}

    \begin{remark}
    \label{rem:reduced-vs-ordinary}
        For every $X$-colored virtual knotoid diagram $K_f$, it is clear that the ordinary and normalized biquandle virtual bracket values are related by $\bvb{K_f}=\delta\,\bvbn{K_f}.$
        Thus, the ordinary biquandle virtual bracket value can be recovered from the normalized value by multiplication by $\delta$. If $\delta$ is invertible, then the normalized value can be recovered $\bvbn{K_f}=\delta^{-1}\,\bvb{K_f}.$ However, $\delta$ does not have to be invertible for biquandle virtual brackets in general.
    \end{remark}

    \begin{proposition}
    \label{prop:reduced-bvb-invariant}
        The normalized biquandle virtual bracket value $\bvbn{K_f}$ is invariant under generalized Reidemeister moves of biquandle colored virtual knotoid diagrams.
    \end{proposition}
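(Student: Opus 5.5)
Because $\delta$ need not be invertible, we cannot deduce the statement from $\bvb{K_f}=\delta\,\bvbn{K_f}$ in Remark~\ref{rem:reduced-vs-ordinary}. The plan is to redo the local argument of \cite{nelson2019biquandle} and \cite{gugumcu2026biquandle} and check that it never uses the component count in a way that depends on the shift by one. Fix a biquandle colored diagram $K_f$ and a second one $K'_{f'}$, related by a single generalized Reidemeister move inside a disk $D$. The colorings agree outside $D$, using the bijection of colorings guaranteed by the biquandle axioms. We compare the two state-sums by grouping states according to their smoothings outside $D$.

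\textbf{Step 1 (local moves away from endpoints).} Fix the smoothings at all crossings outside $D$. For both diagrams, the remaining sum over smoothings inside $D$ becomes a linear combination of the finitely many planar (virtual) tangle states in $D$ with fixed boundary points. The biquandle virtual bracket axioms are precisely the statement that, after including the writhe factor $\omega^{-\operatorname{w}}$, these two linear combinations agree in the skein module. Here a closed loop contained entirely in $D$ is replaced by the factor $\delta$. Each such local identity is then glued to the common exterior state. The total number of components $|s|$ changes exactly as the local count predicts. This holds equally for the exponent $|s|$ and for $|s|-1$, since the shift by $1$ is a global constant independent of the state. Hence each summand identity with $\delta^{|s|}$ that is used in the ordinary proof holds verbatim with $\delta^{|s|-1}$, because both sides carry the same exponent shift. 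The key observation is this: the ordinary invariance proof never divides by $\delta$; it equates coefficients of identical glued states. Therefore subtracting $1$ from every exponent is legitimate.

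\textbf{Step 2 (moves involving endpoints and virtual crossings).} The detour move and the virtual endpoint move change neither the classical crossings, the writhe, the coloring at classical crossings, nor the smoothing coefficients. They also induce a bijection between states that preserves the number of components, including the fact that there is exactly one open component. So these moves preserve every summand, and hence the normalized sum.

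\textbf{Main obstacle.} The delicate point is to make sure that, in Step 1, every component created or destroyed by a local skein identity is a closed component. Equivalently, the open component is never one of the loops absorbed into $\delta$. This holds because moves away from the endpoints occur in a disk $D$ that avoids the tail and head. Consequently the open component passes through $D$ only as arcs with endpoints on $\partial D$, and any loop lying entirely in $D$ is closed. I would verify this carefully for the Reidemeister I identity, where a loop adjacent to a kink is created. This is the case in which the axioms relate $\delta$ to the coefficients and $\omega$.
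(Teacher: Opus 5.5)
Your proposal is correct and takes essentially the same approach as the paper, which also argues that the biquandle virtual bracket axioms encode local state-contribution comparisons, so replacing $\delta^{|s|}$ by $\delta^{|s|-1}$ (a global shift of the exponent) preserves invariance. Your version makes explicit what the paper leaves implicit: the local identities never divide by $\delta$, every loop absorbed into $\delta$ inside the move disk is closed because that disk avoids the endpoints, and detour and virtual endpoint moves preserve each state summand.
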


    \begin{proof}
        The invariance of the biquandle virtual bracket value for virtual links follows from the choice of axioms in the construction. These axioms are motivated by local state contribution comparisons before and after a Reidemeister move \cite{nelson2019biquandle}, and the same argument applies to biquandle colored virtual knotoid diagrams \cite{gugumcu2026biquandle}. Hence, the same axiom set of biquandle virtual brackets imply invariance when we replace the factor $\delta^{|s|}$ by
        $\delta^{|s|-1}$ in the state-sum formula.
    \end{proof}

    Recall from Section~\ref{sec:biquandle-prelims} that the fundamental biquandle virtual bracket expression $\fbvb{K}$ provides a universal expression that could be used to compute the fundamental virtual bracket value by specialization and evaluation. The same approach applies in the normalized setting. 

    For a virtual knotoid diagram $K$, let $\fbvbn{K}$ denote the \textit{normalized fundamental biquandle virtual bracket expression} of $K$, obtained by labeling each semiarc by its corresponding generator of $\mathcal B(K)$ and forming the formal state sum
    \[
    \fbvbn{K} = \omega^{-\operatorname{wr}(K)} \sum_{s\in\mathcal{S}(K)}
    \left(\prod_{v\in C(K)}c_{s,1_{\mathcal{B}(K)}}(v)\right)\delta^{|s|-1}.
    \]
    As in the unnormalized setting, for $f\in\Col_X(K)$, evaluating $\fbvbn{K}$ with respect to $f$ and the chosen biquandle virtual bracket $\beta$ gives $\bvbn{K_f}$. Hence, starting from the behavior of this expression under connected sum is key in studying the behavior of invariants based on biquandle virtual brackets.

    Let $\varphi: Y\rightarrow Z$ be a biquandle homomorphism and let $\varphi^*$ denote the relabeling of a formal biquandle virtual bracket expression over $Y$ obtained by replacing $F_{a,b}\longmapsto F_{\varphi(a),\varphi(b)},$ where $F\in\{A,B,V,C,D,U\},$ and leaving $\delta$ and $\omega$ unchanged.

    \begin{theorem}
    \label{thm:fundamental-bvb-product}
        Let $K_1$ and $K_2$ be two virtual knotoid diagrams, and let $\lambda_i:\mathcal B(K_i)\longrightarrow\mathcal B(K_1\#K_2),\;i=1,2,$ be the canonical homomorphisms arising from the pushout of
        Theorem~\ref{thm:fundamental-biquandle-pushout}. Then
        \begin{align}
        \label{eq:fbvbnidentity}
            \fbvbn{K_1\#K_2}=\lambda_1^\ast
            \left(\fbvbn{K_1}\right)\,\lambda_2^\ast
            \left(\fbvbn{K_2}\right).
        \end{align}
    \end{theorem}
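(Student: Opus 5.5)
We will work with the diagram of $K_1\#K_2$ obtained from a virtual connection arc $\alpha$, as in Proposition~\ref{prop:virtual-connecting-arc}. By Proposition~\ref{prop:reduced-bvb-invariant} and the invariance of $\mathcal B$, this choice of representative is harmless. This diagram has three useful properties:
\begin{itemize}
\item its classical crossing set is $C(K_1)\sqcup C(K_2)$;
\item its semiarcs are those of $K_1$ and $K_2$, except that the head semiarc of $K_1$ and the tail semiarc of $K_2$ are merged along $\alpha$;
\item its writhe satisfies $\operatorname{wr}(K_1\#K_2)=\operatorname{wr}(K_1)+\operatorname{wr}(K_2)$, since $\alpha$ adds only virtual crossings.
\end{itemize}
By the proof of Theorem~\ref{thm:fundamental-biquandle-pushout}, the generator of each semiarc of $K_1\#K_2$ is $\lambda_i(l)$ for the corresponding $l\in L_i$. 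The merged semiarc has generator $\lambda_1(h_1)=\lambda_2(t_2)$.

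The first step is the local coefficient comparison. A crossing $v\in C(K_i)$ has input semiarcs labeled $a,b\in L_i$ in $K_i$. In $K_1\#K_2$ the same crossing has inputs $\lambda_i(a),\lambda_i(b)$ and the same sign. Hence $c_{s,1_{\mathcal B(K_1\#K_2)}}(v)=\lambda_i^\ast\bigl(c_{s_i,1_{\mathcal B(K_i)}}(v)\bigr)$ for matching smoothing types.

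The second step is the state decomposition. States of $K_1\#K_2$ correspond bijectively to pairs $(s_1,s_2)\in\mathcal S(K_1)\times\mathcal S(K_2)$, by choosing smoothings independently at the crossings of each summand. The main point, and the expected obstacle, is the component count $|s|=|s_1|+|s_2|-1$. To prove it, observe that smoothing is local at classical crossings, so $\alpha$ stays untouched in every state. Let $o_1$ be the open component of $s_1$; it runs from the tail of $K_1$ to the head of $K_1$, because smoothings preserve endpoints and each state has a unique open arc. Likewise, let $o_2$ be the open component of $s_2$, running from the tail of $K_2$ to its head. In the state $s$, the arc $\alpha$ joins the head end of $o_1$ to the tail end of $o_2$, which produces a single open component $o_1\cup\alpha\cup o_2$. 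All closed components of $s_1$ and $s_2$ survive unchanged; they are only possibly meeting $\alpha$ at virtual crossings, which do not alter components. Therefore the closed component counts add, giving $|s|-1=(|s_1|-1)+(|s_2|-1)$. Some care is needed here: virtual smoothings could in principle reroute things. But a virtual smoothing is still a local replacement inside a disk around $v$, and the open component is still the unique arc with the endpoints $t_1$ and $h_1$, so the argument goes through.

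The last step is to assemble these facts in the state sum:
\[
\fbvbn{K_1\#K_2}=\omega^{-\operatorname{wr}(K_1)-\operatorname{wr}(K_2)}\sum_{(s_1,s_2)}\prod_{v\in C(K_1)}\lambda_1^\ast(c_{s_1}(v))\prod_{v\in C(K_2)}\lambda_2^\ast(c_{s_2}(v))\,\delta^{|s_1|-1}\delta^{|s_2|-1}.
\]
This sum factors as a product of two sums. Since $\lambda_i^\ast$ acts only on coefficient labels, leaving $\delta$ and $\omega$ fixed, and is multiplicative on formal products, the product equals $\lambda_1^\ast(\fbvbn{K_1})\,\lambda_2^\ast(\fbvbn{K_2})$, which is \eqref{eq:fbvbnidentity}.
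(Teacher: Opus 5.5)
Your proposal is correct and follows essentially the same route as the paper. Both arguments use the state bijection $\mathcal S(K_1\#K_2)\cong\mathcal S(K_1)\times\mathcal S(K_2)$ obtained by joining open components, the additivity of closed-component counts and of writhe, and the relabeling of crossing coefficients by $\lambda_i$, after which the state sum factors; your version makes the open-component and virtual-crossing bookkeeping more explicit. One small correction: Proposition~\ref{prop:reduced-bvb-invariant} concerns evaluated values, not formal expressions, so it does not justify the choice of the connection-arc representative. You don't need that justification anyway, because the $\lambda_i$ are already defined via that diagram and the virtual crossings along $\alpha$ contribute nothing to the formal state sum.
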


    \begin{proof}
        Every state $s$ of $K_1\#K_2$ is obtained uniquely from a pair $(s_1,s_2)\in
        \mathcal S(K_1)\times\mathcal S(K_2)$ by joining the open component of $s_1$ to the open component of $s_2$. Thus, there is a natural bijection
        \[
        \mathcal S(K_1\#K_2)\cong
        \mathcal S(K_1)\times\mathcal S(K_2).
        \]
        Under this correspondence, all closed components of $s_1$ and $s_2$ remain closed. Hence,
        \[
        |s|-1=(|s_1|-1)+(|s_2|-1).
        \]
        Moreover, the crossing set of $K_1\#K_2$ is the union of crossing sets of $K_1$ and $K_2$. Hence, $\operatorname{wr}(K_1\#K_2)=
        \operatorname{wr}(K_1)+\operatorname{wr}(K_2),$ and the smoothing coefficients of $s$ split into the coefficients coming from the crossings of $K_1$ and those coming from the crossings of $K_2$.
        
        The identity coloring $1_{\mathcal{B}(K_1\#K_2)}$ of $K_1\#K_2$ restricts to the coloring of $K_i$ induced by
        $\lambda_i$. Hence, the smoothing coefficients associated to the crossings of $K_i$ are precisely the corresponding coefficients of $s_i$ relabeled by $\lambda_i$. Therefore, for the state $s$ corresponding to $(s_1,s_2)$, the contribution of $s$ is the product of the relabeled contributions of $s_1$ and $s_2$. Summing over $\mathcal S(K_1)\times\mathcal S(K_2)$
        gives
        \[
        \fbvbn{K_1\#K_2}=\lambda_1^\ast
        \left(\fbvbn{K_1}\right)\,\lambda_2^\ast
        \left(\fbvbn{K_2}\right).
        \]
    \end{proof}

    \begin{example}
        Figure~\ref{fig:consumstate} illustrates the correspondence between a pair of states $(s_1,s_2)\in S(K_1)\times S(K_2)$ and the state $s=s_1\#s_2\in\mathcal S(K_1\#K_2)$, together with the factorization of the associated normalized state weight. To simplify notations in the figure, for a generator $x$ we let 
        \[
            x^\ast=\begin{cases}
                \lambda_1(x),& x\in\mathcal{B}(K_1),\\
                \lambda_2(x),& x\in\mathcal{B}(K_2).
            \end{cases}
        \]
        Without the writhe normalization, the normalized state weights of $s_1$ and $s_2$ are $V_{t,q}C_{s,p}\delta$ and $D_{d,f}C_{f,a}C_{c,d}\delta$, respectively. The corresponding state $s=s_1\#s_2$ has normalized state weight
        \[
        V_{\lambda_1(t),\lambda_1(q)}
        C_{\lambda_1(s),\lambda_1(p)}
        D_{\lambda_2(d),\lambda_2(f)}
        C_{\lambda_2(f),\lambda_2(a)}
        C_{\lambda_2(c),\lambda_2(d)}
        \delta^2,
        \]
        which is the product of the two state weights after relabeling the generators
        by $\lambda_1$ and $\lambda_2$, as expected.
        \vspace{1 cm}
        \begin{figure}[h!]
            \centering
            \begin{overpic}[width=0.95\linewidth]{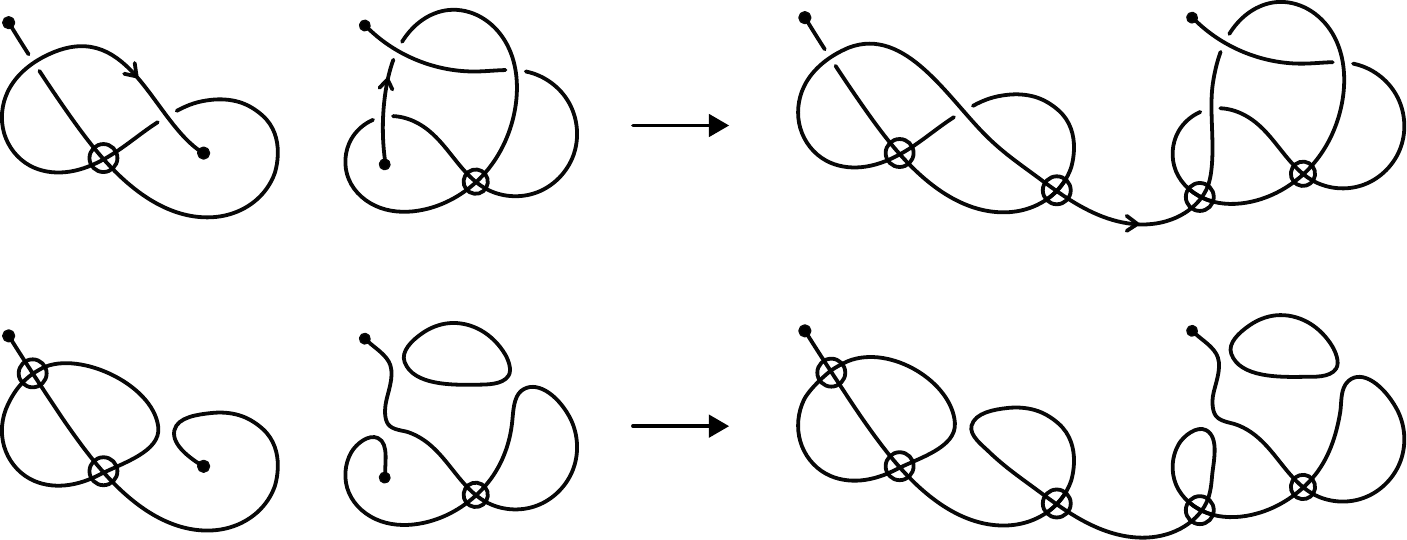}
            \put(-1.5,35){\footnotesize $t$}
            \put(11,21){\footnotesize $s$}
            \put(1,24){\footnotesize $r$}
            \put(7,36){\footnotesize $q$}
            \put(11.5,26){\footnotesize $p$}
        
            \put(28.5,26){\footnotesize $g$}
            \put(25,32){\footnotesize $f$}
            \put(31,39){\footnotesize $e$}
            \put(37,28){\footnotesize $d$}
            \put(37,21.5){\footnotesize $c$}
            \put(31,31){\footnotesize $b$}
            \put(23.5,36){\footnotesize $a$}
        
            \put(54,35){\footnotesize $t^\ast$}
            \put(67.5,21){\footnotesize $s^\ast$}
            \put(57,24){\footnotesize $r^\ast$}
            \put(63,36){\footnotesize $q^\ast$}
            \put(75,20){\footnotesize $p^\ast=g^\ast$}
        
            \put(83.5,32.5){\footnotesize $f^\ast$}
            \put(90,39.5){\footnotesize $e^\ast$}
            \put(96,28.5){\footnotesize $d^\ast$}
            \put(95,22){\footnotesize $c^\ast$}
            \put(90,31.5){\footnotesize $b^\ast$}
            \put(81.5,36.5){\footnotesize $a^\ast$}
        
            \put(-3.5,11){\footnotesize $V_{t,q}$}
            \put(10.5,10.5){\footnotesize $C_{s,p}$}
            \put(22,8.5){\footnotesize $D_{d,f}$}
            \put(25.5,15.5){\footnotesize $C_{f,a}$}
            \put(37,12){\footnotesize $C_{c,d}$}
        
            \put(52,11.5){\footnotesize $V_{t^\ast,q^\ast}$}
            \put(68,10.5){\footnotesize $C_{s^\ast,p^\ast}$}
            \put(79,9){\footnotesize $D_{d^\ast,f^\ast}$}
            \put(83.5,16.5){\footnotesize $C_{f^\ast,a^\ast}$}
            \put(96,12.5){\footnotesize $C_{c^\ast,d^\ast}$}
        
            \put(7,43){$K_1$}
            \put(31,43){$K_2$}
            \put(70,43){$K=K_1\#K_2$}
            \put(7,-3){$s_1$}
            \put(31,-3){$s_2$}
            \put(73,-3){$s=s_1\#s_2$}
            \end{overpic}
            \vspace{0.5 cm}
            \caption{
            A pair of states $s_1\in\mathcal S(K_1)$ and $s_2\in\mathcal S(K_2)$
            determines a state $s\in\mathcal S(K)$.
            The starred labels on $K$ and $s$ denote the images of the corresponding semiarc generators under the canonical homomorphisms $\lambda_1\colon \mathcal B(K_1)\to\mathcal B(K_1\#K_2)$ and
            $\lambda_2\colon \mathcal{B}(K_2)\to\mathcal B(K_1\#K_2)$. The relation $p^\ast=g^\ast$ follows from the identification along the virtual connection arc.}
            \label{fig:consumstate}
        \end{figure}
    \end{example}

    \begin{corollary}
    \label{cor:reduced-bvb-product}
        Let $X$ be a biquandle, $\beta$ a biquandle virtual bracket, and let $K_1$ and $K_2$ be two virtual
        knotoid diagrams. Suppose that
        $f_1\in\Col_X(K_1)$ and $f_2\in\Col_X(K_2)$ are compatible, that is, $f_1(h_1)=f_2(t_2).$
        Let $f=f_1\# f_2\in\Col_X(K_1\#K_2)$ denote the unique coloring corresponding to $(f_1,f_2)$. Then,
        \[
        \bvbn{(K_1\#K_2)_{f}}=
        \bvbn{(K_1)_{f_1}}
        \bvbn{(K_2)_{f_2}}.
        \]
    \end{corollary}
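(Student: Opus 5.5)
The plan is to deduce the corollary from Theorem~\ref{thm:fundamental-bvb-product} by specializing the formal identity \eqref{eq:fbvbnidentity} along the coloring $f$. By Proposition~\ref{prop:compatible-colorings} and the universal property in Theorem~\ref{thm:fundamental-biquandle-pushout}, the coloring $f=f_1\#f_2$ is the unique biquandle homomorphism $f:\mathcal B(K_1\#K_2)\to X$ satisfying $f\circ\lambda_1=f_1$ and $f\circ\lambda_2=f_2$. Evaluating a formal expression at a coloring $g$ is the same as applying the relabeling $g^\ast$ and then computing the resulting coefficients, $\delta$ and $\omega$ in $R$ according to $\beta$. So we will apply $f^\ast$ to both sides of \eqref{eq:fbvbnidentity} and then evaluate.

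\textbf{Step 1.} Check that relabeling is functorial and multiplicative. For composable homomorphisms we have $(f\circ\lambda_i)^\ast=f^\ast\circ\lambda_i^\ast$, which holds coefficient by coefficient: $F_{a,b}\mapsto F_{\lambda_i(a),\lambda_i(b)}\mapsto F_{f\lambda_i(a),f\lambda_i(b)}$. Moreover, $f^\ast$ followed by evaluation in $R$ is a ring homomorphism from the formal polynomial expressions (in the symbols $F_{a,b}$, $\delta$, $\omega^{\pm1}$) to $R$. Both facts are immediate, because relabeling acts only on the indices of the coefficient symbols and leaves $\delta$ and $\omega$ fixed.

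\textbf{Step 2.} Apply $\mathrm{ev}_\beta\circ f^\ast$ to \eqref{eq:fbvbnidentity}. The left-hand side is the evaluation of $\fbvbn{K_1\#K_2}$ at $f$, which equals $\bvbn{(K_1\#K_2)_f}$ by the remark preceding Theorem~\ref{thm:fundamental-bvb-product}. By multiplicativity, the right-hand side becomes
\[
\mathrm{ev}_\beta\bigl((f\lambda_1)^\ast\fbvbn{K_1}\bigr)\cdot\mathrm{ev}_\beta\bigl((f\lambda_2)^\ast\fbvbn{K_2}\bigr)
=\mathrm{ev}_\beta\bigl(f_1^\ast\fbvbn{K_1}\bigr)\cdot\mathrm{ev}_\beta\bigl(f_2^\ast\fbvbn{K_2}\bigr),
\]
which equals $\bvbn{(K_1)_{f_1}}\bvbn{(K_2)_{f_2}}$.

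\textbf{Main obstacle.} The step needing the most care is confirming that "evaluation of $\fbvbn{K}$ at $f$" really coincides with the state sum $\bvbn{K_f}$. This requires that the coefficient at each crossing of $K$, colored by $1_{\mathcal B(K)}$, has as indices exactly the generators labeling its input semiarcs. It also requires that the writhe factor $\omega^{-\operatorname{wr}}$ is carried along unchanged, which uses $\operatorname{wr}(K_1\#K_2)=\operatorname{wr}(K_1)+\operatorname{wr}(K_2)$, already established inside the proof of the theorem. A further subtlety is that formal identities between relabeled expressions should be read as identities of expressions, not of values in a possibly non-free ring. Applying the homomorphism $\mathrm{ev}_\beta\circ f^\ast$ after the formal identity avoids this issue.
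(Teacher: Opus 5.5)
Your proposal is correct and follows essentially the same route as the paper: evaluate the formal identity \eqref{eq:fbvbnidentity} at the coloring $f$, then use $f\circ\lambda_1=f_1$ and $f\circ\lambda_2=f_2$. Your extra checks, namely $(f\circ\lambda_i)^\ast=f^\ast\circ\lambda_i^\ast$ and that relabeling followed by evaluation is a ring homomorphism, make explicit what the paper's two-line proof leaves implicit.
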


    \begin{proof}
        Evaluating the identity \eqref{eq:fbvbnidentity} of
        Theorem~\ref{thm:fundamental-bvb-product} with respect to the coloring $f$ and the bracket $\beta$ gives
        \[
        \bvbn{(K_1\#K_2)_{f}}=
        \bvbn{(K_1)_{f\circ \lambda_1}}
        \bvbn{(K_2)_{f\circ \lambda_2}}.
        \]
        By definition of the coloring $f=f_1\# f_2$, we have $f\circ\lambda_1=f_1,$ and $f\circ\lambda_2=f_2.$
    \end{proof}

    \begin{corollary}
        Under the hypotheses of
        Corollary~\ref{cor:reduced-bvb-product},
        \[
        \bvb{(K_1)_{f_1}}
        \bvb{(K_2)_{f_2}}=\delta\,
        \bvb{(K_1\#K_2)_{f}}.
        \]
        In particular, if $\delta$ is invertible in $R$, then we have a way of obtaining the unnormalized bracket value of the connected sum from the values of the summands:
        \[
        \bvb{(K_1\#K_2)_{f}}=\delta^{-1}\,\bvb{(K_1)_{f_1}}
        \bvb{(K_2)_{f_2}}.
        \]
    \end{corollary}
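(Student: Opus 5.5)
The plan is to derive the statement directly from Corollary~\ref{cor:reduced-bvb-product} together with Remark~\ref{rem:reduced-vs-ordinary}. By the remark, each unnormalized value is $\delta$ times the corresponding normalized value:
\[
\bvb{(K_1)_{f_1}}=\delta\,\bvbn{(K_1)_{f_1}},\qquad
\bvb{(K_2)_{f_2}}=\delta\,\bvbn{(K_2)_{f_2}},\qquad
\bvb{(K_1\#K_2)_{f}}=\delta\,\bvbn{(K_1\#K_2)_{f}}.
\]

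First, multiply the first two identities. Using commutativity of $R$, this gives
\[
\bvb{(K_1)_{f_1}}\bvb{(K_2)_{f_2}}=\delta^2\,\bvbn{(K_1)_{f_1}}\bvbn{(K_2)_{f_2}}.
\]
Second, apply Corollary~\ref{cor:reduced-bvb-product} to rewrite the product of normalized values as $\bvbn{(K_1\#K_2)_f}$. The right-hand side then becomes $\delta\cdot\delta\,\bvbn{(K_1\#K_2)_f}$. Third, use the third identity to replace $\delta\,\bvbn{(K_1\#K_2)_f}$ by $\bvb{(K_1\#K_2)_f}$. This yields
\[
\bvb{(K_1)_{f_1}}\bvb{(K_2)_{f_2}}=\delta\,\bvb{(K_1\#K_2)_f}.
\]

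For the second claim, assume $\delta\in R^\times$ and multiply the identity just obtained by $\delta^{-1}$. This gives the stated formula for $\bvb{(K_1\#K_2)_f}$.

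There is no real obstacle in this argument. The one point to keep in mind is that the first identity uses no division: it holds over any commutative ring $R$, even when $\delta$ is a zero divisor. Invertibility of $\delta$ is needed only for the final step.
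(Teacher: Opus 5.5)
Your proposal is correct and follows the paper's proof essentially step for step. You write each unnormalized value as $\delta$ times its normalized value via Remark~\ref{rem:reduced-vs-ordinary}, collapse the product of normalized values with Corollary~\ref{cor:reduced-bvb-product}, absorb one factor of $\delta$ back into the bracket of $K_1\#K_2$, and then multiply by $\delta^{-1}$ in the invertible case.
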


    \begin{proof}
        By Remark~\ref{rem:reduced-vs-ordinary} and
        Corollary~\ref{cor:reduced-bvb-product},
        \[
        \begin{aligned}
        \bvb{(K_1)_{f_1}}
        \bvb{(K_2)_{f_2}}
        &=
        \delta^2
        \bvbn{(K_1)_{f_1}}
        \bvbn{(K_2)_{f_2}}\\
        &=
        \delta^2\bvbn{(K_1\#K_2)_{f}}\\
        &=
        \delta\,
        \bvb{(K_1\#K_2)_{f}}.
        \end{aligned}
        \]
        If $\delta$ is invertible, the second formula follows by multiplying both sides
        by $\delta^{-1}$.
    \end{proof}

\subsubsection{Normalized biquandle virtual bracket multiset invariants}

    In this subsection, we study the normalized analogs of the biquandle virtual bracket multiset invariants of virtual knotoids reviewed in Section~\ref{sec:biquandle-prelims}. 

    \begin{definition}
        Let $K$ be a virtual knotoid diagram, $X$ a finite biquandle, and let
        $\beta$ be a biquandle virtual bracket. The \textit{normalized biquandle virtual bracket multiset} of $K$ is
        \[
        \bvbnmult{K}=
        \left\{\left\{
        \bvbn{K_f}
        \;\middle|\;
        f\in\operatorname{Col}_X(K)
        \right\}\right\}.
        \]
        For $x_i,x_j\in X$, the \emph{$(i,j)$-normalized biquandle virtual bracket
        multiset} is
        \[
        \widetilde{\Phi}_{X_{ij}}^{\beta,M}(K)
        =
        \left\{\left\{
        \bvbn{K_f}
        \;|\;
        f\in\operatorname{Col}_X^{ij}(K)
        \right\}\right\}.
        \]
        Finally, the family 
        \[\left\{
        \widetilde{\Phi}_{X_{ij}}^{\beta,M}(K)
        \right\}_{i,j=1}^{n}
        \]
        is the \textit{normalized biquandle virtual bracket multiset family}.
    \end{definition}

    \begin{remark}
    \label{rem:comparison}
        Let $\beta$ be a biquandle virtual bracket over a commutative and unitary ring $R$. Consider the map
        \begin{align*}
            m_\delta:R&\longrightarrow R,\\
            r&\longmapsto \delta r.
        \end{align*}
        The ordinary biquandle virtual bracket multiset
        is obtained from its normalized version by
        applying $m_\delta$ to each element, preserving
        multiplicities. Thus, the normalized multiset
        invariant is at least as strong as its unnormalized version. If $\delta$ is not a zero divisor in $R$, then
        $m_\delta$ is injective, and the two invariants
        have the same distinguishing power. This is the
        case for the Kauffman bracket over
        $R=\mathbb{Z}[A,A^{-1}]$, where
        $\delta=-A^2-A^{-2}$ is
        not a zero divisor. However, if $\delta$ is a zero divisor,
        then $m_\delta$ is not injective. Hence, in this case, multiplication by $\delta$ causes loss of information and consequently,
        the normalized multiset invariant is
        strictly stronger than its unnormalized version.
        We demonstrate this by an explicit
        example in Section~\ref{sec:bvb-matrices}.
    \end{remark}

    As in the unnormalized setting, it is clear that we have 
    \[
        \bvbnmult{K})=
        \biguplus_{i,j=1}^{n}
        \widetilde{\Phi}_{X_{ij}}^{\beta,M}(K).
    \]
    
    It follows from the invariance of normalized biquandle virtual bracket value under generalized Reidemeister moves that these multisets are invariants of virtual knotoids. To be able to describe the behavior of these multiset invariants under connected sum, we
    fix our notation for multiplying multisets.

    \begin{definition}
        Let $\mathcal{A}$ and $\mathcal{B}$ be finite multisets with elements in the
        commutative ring $R$. The \textit{product} of these multisets is
        \[
        \mathcal{A}\cdot\mathcal{B}=\left\{\left\{
        ab\;|\;a\in\mathcal{A},\ b\in\mathcal{B}\right\}\right\},
        \]
        where the multiplicity of an element $r\in R$ is the number of pairs
        $(a,b)$, counted with their multiplicity, such that $ab=r$.
    \end{definition}

    The multiplicative behavior of the normalized biquandle virtual bracket value for compatible biquandle colorings provides a composition rule for the family of normalized biquandle virtual bracket multisets.

    \begin{theorem}
    \label{thm:reduced-bvb-multiset-product}
        Let $X=\{x_1,\ldots,x_n\}$ be a biquandle, $\beta$ a
        biquandle virtual bracket, and let
        $K_1$ and $K_2$ be two virtual knotoid diagrams. Then, for every
        $i,k\in\{1,\ldots,n\}$,
        \[
        \widetilde{\Phi}_{X_{ik}}^{\beta,M}(K_1\#K_2)
        =
        \biguplus_{j=1}^{n}
        \left(
        \widetilde{\Phi}_{X_{ij}}^{\beta,M}(K_1)
        \cdot
        \widetilde{\Phi}_{X_{jk}}^{\beta,M}(K_2)
        \right).
        \]
    \end{theorem}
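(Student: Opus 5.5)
The plan is to combine the coloring bijection \eqref{eq:coloringbij} with the multiplicativity of Corollary~\ref{cor:reduced-bvb-product}, and then compare multiplicities element by element.

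First, fix $i,k$. By Proposition~\ref{prop:compatible-colorings} and \eqref{eq:coloringbij}, the assignment $(f_1,f_2)\mapsto f_1\# f_2$ gives a bijection
\[
\Psi:\bigsqcup_{j=1}^{n}\left(\Col_X^{ij}(K_1)\times\Col_X^{jk}(K_2)\right)\longrightarrow\Col_X^{ik}(K_1\#K_2).
\]
I will check that this bijection respects the indices. If $f=f_1\#f_2$, then $f\circ\lambda_1=f_1$ and $f\circ\lambda_2=f_2$. The tail generator of $K_1\#K_2$ is $\lambda_1(t_1)$ and the head generator is $\lambda_2(h_2)$, by the corollary on fundamental pointed biquandles. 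Hence $f$ colors the tail by $f_1(t_1)=x_i$ and the head by $f_2(h_2)=x_k$. The index $j$ in the disjoint union is recovered as the common value $f_1(h_1)=f_2(t_2)=x_j$. So the pieces indexed by different $j$ are indeed disjoint, and their union is all of $\Col_X^{ik}(K_1\#K_2)$.

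Second, I will transport values along $\Psi$. By Corollary~\ref{cor:reduced-bvb-product}, for each pair in the $j$-th piece,
\[
\bvbn{(K_1\#K_2)_{\Psi(f_1,f_2)}}=\bvbn{(K_1)_{f_1}}\bvbn{(K_2)_{f_2}}.
\]
Therefore, for any $r\in R$, the multiplicity of $r$ in $\widetilde{\Phi}_{X_{ik}}^{\beta,M}(K_1\#K_2)$ equals the number of triples $(j,f_1,f_2)$ with $f_1\in\Col_X^{ij}(K_1)$, $f_2\in\Col_X^{jk}(K_2)$ and $\bvbn{(K_1)_{f_1}}\bvbn{(K_2)_{f_2}}=r$. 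For fixed $j$, the number of such pairs is exactly the multiplicity of $r$ in the product multiset $\widetilde{\Phi}_{X_{ij}}^{\beta,M}(K_1)\cdot\widetilde{\Phi}_{X_{jk}}^{\beta,M}(K_2)$. This is because the multiset product counts pairs of elements with multiplicity, which is the same as counting pairs of colorings. Summing over $j$ then gives the multiplicity of $r$ in the multiset union $\biguplus_{j}$, and the identity follows.

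The main obstacle is bookkeeping rather than substance. The work lies in the first step: confirming that $\Psi$ matches the $(i,k)$ and $j$ labelings exactly, and that the multiset product counts colorings rather than distinct values. Once those are checked, the rest is a direct count.
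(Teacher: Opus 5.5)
Your proposal is correct and follows the paper's proof: both combine the endpoint-partitioned coloring bijection from Proposition~\ref{prop:compatible-colorings} with the multiplicativity of Corollary~\ref{cor:reduced-bvb-product}, then take the multiset union over the intermediate color $x_j$. Your additional checks make explicit what the paper leaves implicit: that the bijection respects the $(i,k)$ and $j$ labels via the pointed fundamental biquandle, and that the multiset product counts pairs of colorings rather than distinct values.
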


    \begin{proof}
        By Proposition~\ref{prop:compatible-colorings}, there is a bijection
        \[
        \operatorname{Col}_X^{ik}(K_1\#K_2)
        \cong
        \bigsqcup_{j=1}^{n}
        \left(
        \operatorname{Col}_X^{ij}(K_1)
        \times
        \operatorname{Col}_X^{jk}(K_2)
        \right).
        \]
        Let $f$ be the coloring of $K_1\#K_2$ corresponding to the coloring pair
        $(f_1,f_2)\in\operatorname{Col}_X^{ij}(K_1)
        \times\operatorname{Col}_X^{jk}(K_2)$. By Corollary~\ref{cor:reduced-bvb-product}, the normalized biquandle virtual
        bracket value associated with this coloring satisfies
        \[
        \bvbn{(K_1\#K_2)_{f}}=
        \bvbn{(K_1)_{f_1}}\bvbn{(K_2)_{f_2}}.
        \]
        Hence, for a fixed intermediate color $x_j$, the multiset of values resulting
        from compatible pairs of colorings is precisely the multiset product
        \[
        \widetilde{\Phi}_{X_{ij}}^{\beta,M}(K_1)
        \cdot
        \widetilde{\Phi}_{X_{jk}}^{\beta,M}(K_2).
        \]
        Taking the multiset union over all possible intermediate colors $x_j$ for $j=1,\dots ,n$ completes the proof.
    \end{proof}

    \begin{corollary}
    \label{cor:reduced-bvb-total-multiset-product}
        Under the hypotheses of
        Theorem~\ref{thm:reduced-bvb-multiset-product},
        \[
        \bvbnmult{K_1\#K_2}
        =\biguplus_{i,j,k=1}^{n}\left(
        \widetilde{\Phi}_{X_{ij}}^{\beta,M}(K_1)
        \cdot\widetilde{\Phi}_{X_{jk}}^{\beta,M}(K_2)
        \right).
        \]
    \end{corollary}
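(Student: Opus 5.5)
The plan is to deduce the corollary directly from Theorem~\ref{thm:reduced-bvb-multiset-product} and the decomposition of the normalized multiset into its $(i,k)$-pieces. First, I will apply the identity
\[
\bvbnmult{K} = \biguplus_{i,k=1}^{n}\widetilde{\Phi}_{X_{ik}}^{\beta,M}(K),
\]
noted just before the definition of the multiset product, to $K=K_1\#K_2$. This identity holds because $\Col_X(K)=\bigsqcup_{i,k}\Col_X^{ik}(K)$: each coloring falls in exactly one block, determined by its tail and head colors. So the multiplicities in $\bvbnmult{K_1\#K_2}$ are the sums of the multiplicities in the pieces $\widetilde{\Phi}_{X_{ik}}^{\beta,M}(K_1\#K_2)$.

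Next, I will substitute into each piece the formula of Theorem~\ref{thm:reduced-bvb-multiset-product}:
\[
\widetilde{\Phi}_{X_{ik}}^{\beta,M}(K_1\#K_2)=\biguplus_{j=1}^{n}\bigl(\widetilde{\Phi}_{X_{ij}}^{\beta,M}(K_1)\cdot\widetilde{\Phi}_{X_{jk}}^{\beta,M}(K_2)\bigr).
\]
Multiset union is associative and commutative, since multiplicities simply add. Hence the iterated union $\biguplus_{i,k}\biguplus_{j}$ collapses to the single union $\biguplus_{i,j,k=1}^{n}$, which is the claimed formula.

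There is no real obstacle here. The only point needing care is that the multiset unions are taken with additive multiplicities, so no value is lost when different triples $(i,j,k)$ produce equal bracket values. This is guaranteed by the disjointness in the bijection of Proposition~\ref{prop:compatible-colorings}, which is already built into Theorem~\ref{thm:reduced-bvb-multiset-product}.
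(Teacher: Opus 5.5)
Your proposal is correct and follows the paper's proof essentially step for step. You decompose $\bvbnmult{K_1\#K_2}$ into the blocks $\widetilde{\Phi}_{X_{ik}}^{\beta,M}(K_1\#K_2)$, substitute Theorem~\ref{thm:reduced-bvb-multiset-product} into each block, and merge the iterated unions $\biguplus_{i,k}\biguplus_{j}$ into a single union over $(i,j,k)$, with your remark on additive multiplicities only making explicit what the paper leaves implicit.
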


    \begin{proof}
        The biquandle virtual bracket multiset of $K_1\#K_2$ satisfies
        \[
        \bvbnmult{K_1\#K_2}
        =
        \biguplus_{i,k=1}^{n}
        \widetilde{\Phi}_{X_{ik}}^{\beta,M}(K_1\#K_2).
        \]
        Applying Theorem~\ref{thm:reduced-bvb-multiset-product} to each term on the right-hand side gives
        \[
        \bvbnmult{K_1\#K_2}=
        \biguplus_{i,k=1}^{n}\biguplus_{j=1}^{n}
        \left(\widetilde{\Phi}_{X_{ij}}^{\beta,M}(K_1)\cdot
        \widetilde{\Phi}_{X_{jk}}^{\beta,M}(K_2)
        \right)=\biguplus_{i,j,k=1}^{n}
        \left(\widetilde{\Phi}_{X_{ij}}^{\beta,M}(K_1)\cdot
        \widetilde{\Phi}_{X_{jk}}^{\beta,M}(K_2)\right).
        \]
    \end{proof}

    \subsubsection{Normalized biquandle virtual bracket matrices}
    \label{sec:bvb-matrices}
    
    We now specialize our results to the case where the coefficient ring $R$ of a biquandle virtual bracket is a number ring. In this case, as in Section~\ref{sec:biquandle-prelims}, the multisets of normalized biquandle virtual bracket values can be encoded by polynomials.

    \begin{definition}
        Let $K$ be a virtual knotoid diagram, $X$ be a finite biquandle, and let
        $\beta$ be a biquandle virtual bracket over a commutative unitary number ring $R$.
        The \textit{normalized biquandle virtual bracket polynomial} of $K$ is defined by
        \[
        \widetilde{\Phi}_X^{\beta}(K)=
        \sum_{f\in\Col_X(K)}
        u^{\bvbn{K_f}},
        \]
        where $u$ is a formal variable.
    \end{definition}

    For fixed elements $x_i,x_j\in X$, we define
    \[
    \widetilde{\Phi}_{X_{ij}}^{\beta}(K)=
    \sum_{f\in\operatorname{Col}_X^{ij}(K)}
    u^{\bvbn{K_f}}.
    \]
    That is, $\widetilde{\Phi}_{X_{ij}}^{\beta}(K)$ is the polynomial encoding the
    multiset $\widetilde{\Phi}_{X_{ij}}^{\beta,M}(K).$
    Since the set of all $X$-colorings of $K$ is the disjoint union of its
    endpoint-partitioned coloring sets, we have
    \[
    \widetilde{\Phi}_X^{\beta}(K)=\sum_{i,j=1}^{n}
    \widetilde{\Phi}_{X_{ij}}^{\beta}(K).
    \]

    The multiset composition rule of
    Theorem~\ref{thm:reduced-bvb-multiset-product} involves multiplication of biquandle virtual bracket values. If two bracket values are $a$ and $b$ for $K_1$ and $K_2$, respectively, then the contribution to the value of the connected sum $K_1\#K_2$ is $ab$. Hence, the corresponding monomial in the biquandle virtual bracket polynomial is $u^{ab}$. Thus, we use the following product.

    \begin{definition}
        Let $p(u)=\sum_{a}m_a u^a$ and $
        q(u)=\sum_{b}n_b u^b$ be polynomials encoding finite multisets of elements of $R$. We define their
        \textit{bracket product} by
        \[
        p(u)\star q(u)=\sum_{a,b}m_a n_b\,u^{ab}.
        \]
    \end{definition}

    \begin{remark}
    \label{rem:encodings}
        The operation $\star$ encodes the multiset
        product. That is, if $p(u)$ and $q(u)$ encode multisets $\mathcal{A}$ and $\mathcal{B}$, respectively, then $p(u)\star q(u)$ encodes their multiset product $\mathcal{A}\cdot\mathcal{B}.$
        Similarly, ordinary polynomial addition encodes multiset union with additive multiplicities.
    \end{remark} 
    
    We can now describe the behavior of the normalized biquandle virtual bracket
    polynomial under connected sum.

    \begin{corollary}
        Let $X=\{x_1,\ldots,x_n\}$ be a biquandle, $\beta$ a
        biquandle virtual bracket over a commutative unitary number ring $R$, and
        let $K_1$ and $K_2$ be two virtual knotoid diagrams. Then
        \begin{align}
        \label{eq:multisets}
            \widetilde{\Phi}_X^{\beta}(K_1\#K_2)=
            \sum_{i,j,k=1}^{n}\left(
            \widetilde{\Phi}_{X_{ij}}^{\beta}(K_1)\star
            \widetilde{\Phi}_{X_{ij}}^{\beta}(K_2)\right).
        \end{align}
    \end{corollary}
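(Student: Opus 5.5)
The plan is to translate Corollary~\ref{cor:reduced-bvb-total-multiset-product} into the polynomial language using the dictionary of Remark~\ref{rem:encodings}. One point needs settling first. The index pattern forced by the coloring bijection \eqref{eq:coloringbij} pairs the $(i,j)$-part of $K_1$ with the $(j,k)$-part of $K_2$. So I read the second factor in \eqref{eq:multisets} as $\widetilde{\Phi}_{X_{jk}}^{\beta}(K_2)$, which is what the proof will establish. With the literal index $ij$, the sum over $k$ would only contribute a spurious factor $n$, and the formula would in general be false.

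First, I work at the level of each entry. Fix $i,k$. By Theorem~\ref{thm:reduced-bvb-multiset-product}, the multiset $\widetilde{\Phi}_{X_{ik}}^{\beta,M}(K_1\#K_2)$ is the multiset union over $j$ of the products $\widetilde{\Phi}_{X_{ij}}^{\beta,M}(K_1)\cdot\widetilde{\Phi}_{X_{jk}}^{\beta,M}(K_2)$.

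Next, I apply the encoding. By definition, $\widetilde{\Phi}_{X_{ij}}^{\beta}(K_1)$ encodes $\widetilde{\Phi}_{X_{ij}}^{\beta,M}(K_1)$, and $\widetilde{\Phi}_{X_{jk}}^{\beta}(K_2)$ encodes $\widetilde{\Phi}_{X_{jk}}^{\beta,M}(K_2)$. By Remark~\ref{rem:encodings}, $\star$ encodes the multiset product and ordinary addition encodes $\biguplus$. This gives
\[
\widetilde{\Phi}_{X_{ik}}^{\beta}(K_1\#K_2)=\sum_{j=1}^{n}\widetilde{\Phi}_{X_{ij}}^{\beta}(K_1)\star\widetilde{\Phi}_{X_{jk}}^{\beta}(K_2).
\]
If I want to check this directly, I collect $m_a n_b$ as the number of pairs $(f_1,f_2)$ in $\operatorname{Col}_X^{ij}(K_1)\times\operatorname{Col}_X^{jk}(K_2)$ with normalized values $a$ and $b$. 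By Corollary~\ref{cor:reduced-bvb-product}, each such pair contributes exactly the monomial $u^{ab}$.

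Finally, I sum over $i,k$, using the decomposition $\widetilde{\Phi}_X^{\beta}(K_1\#K_2)=\sum_{i,k}\widetilde{\Phi}_{X_{ik}}^{\beta}(K_1\#K_2)$ noted before the statement. This yields the triple sum.

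The only delicate step is that encoding multisets by formal sums $\sum m_a u^a$ with exponents in $R$ is injective. That is, equal multisets correspond to equal formal expressions, and $\star$ is well defined on them. This holds because distinct values $a\in R$ give distinct formal monomials, so $\star$ is well defined from its value on monomials, $u^a\star u^b=u^{ab}$, extended bilinearly. Everything else is bookkeeping already done in Theorem~\ref{thm:reduced-bvb-multiset-product}.
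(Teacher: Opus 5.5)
Your proof is correct and is essentially the paper's argument. The paper encodes the total multiset identity of Corollary~\ref{cor:reduced-bvb-total-multiset-product} via Remark~\ref{rem:encodings}, whereas you encode entrywise from Theorem~\ref{thm:reduced-bvb-multiset-product} and then sum over $i,k$, which is the same bookkeeping in a different order. You are also right that the second factor must be $\widetilde{\Phi}_{X_{jk}}^{\beta}(K_2)$: the multiset corollary the paper cites gives exactly that version, and the literal $ij$ index is a typo. For instance, for two crossingless diagrams with $n\geq 2$ the literal formula would give $n^2u$ instead of the correct $nu$.
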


    \begin{proof}
        By Corollary~\ref{cor:reduced-bvb-total-multiset-product}, we have
        \begin{align*}
            \bvbnmult{K_1\#K_2}=
            \biguplus_{i,j,k=1}^{n}\left(
            \widetilde{\Phi}_{X_{ij}}^{\beta,M}(K_1)
            \cdot\widetilde{\Phi}_{X_{jk}}^{\beta,M}(K_2)
            \right).
        \end{align*}
        Each multiset in this equation is encoded by its corresponding polynomial. Hence, by Remark~\ref{rem:encodings}, we obtain \eqref{eq:multisets}.
    \end{proof}

    We define the normalized biquandle virtual bracket matrix analogously to the unnormalized version in Section~\ref{sec:biquandle-prelims}.

    \begin{definition}
        Let $X=\{x_1,\ldots,x_n\}$ be a biquandle, $K$ a virtual
        knotoid diagram, and $\beta$ a biquandle virtual bracket over a
        commutative number ring $R$. The \textit{normalized biquandle virtual bracket
        matrix} of $K$ is
        \[
        \widetilde{\mathcal{M}}_X^{\beta}(K)=
        \begin{bmatrix}
        \widetilde{\Phi}_{X_{11}}^{\beta}(K)&\cdots&
        \widetilde{\Phi}_{X_{1n}}^{\beta}(K)\\
        \vdots&\ddots&\vdots\\
        \widetilde{\Phi}_{X_{n1}}^{\beta}(K)&\cdots&
        \widetilde{\Phi}_{X_{nn}}^{\beta}(K)
        \end{bmatrix}.
        \]
    \end{definition}

    We now provide an example demonstrating that the normalized biquandle virtual bracket multiset invariant can be strictly stronger than its unnormalized version, as mentioned in Remark~\ref{rem:comparison}.

    \begin{example}
        Consider the biquandle $X=\mathbb{Z}_3$ with operations defined as
        \[
        x \un y = x+y+1, \qquad
        x \ovr y = 2x+1
        \]
        for all $x,y\in X$. Our Python and Mathematica computations show that the following matrix of coefficient maps defines a biquandle virtual bracket $\beta$
        over $R=\mathbb{Z}_9$:
        \[
        \left[
        \begin{array}{ccc|ccc|ccc|ccc|ccc|ccc}
        6 & 6 & 3 & 6 & 6 & 3 & 2 & 5 & 7 & 3 & 3 & 6 & 3 & 3 & 6 & 5 & 2 & 4 \\
        3 & 6 & 6 & 3 & 6 & 6 & 4 & 2 & 2 & 6 & 3 & 3 & 6 & 3 & 3 & 7 & 5 & 5 \\
        6 & 6 & 6 & 6 & 6 & 6 & 8 & 8 & 2 & 3 & 3 & 3 & 3 & 3 & 3 & 8 & 8 & 5
        \end{array}
        \right],
        \]
        with $\delta=3$ and $w=8$ where the blocks of the matrix represent the operation tables of the coefficient maps $A,B,V,C,D$ and $U,$ respectively.

        \begin{figure}[h!]
		\begin{subfigure}{0.5\textwidth}
			\centering
			\begin{overpic}[width=30 mm]{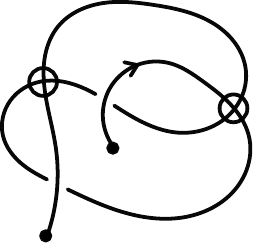}
            \put(31,30){$a_1$}
            \put(63,15){$a_2$}
            \put(-13,44){$a_3$}
            \put(33,82){$a_4$}
            \put(2,3){$a_5$}
            \end{overpic}
			\caption{A virtual knotoid diagram $K_1$.}
            \vspace{0.3 cm}
			\label{21}
		\end{subfigure}
		\begin{subfigure}{0.5\textwidth}
			\centering
			\begin{overpic}[width=35 mm]{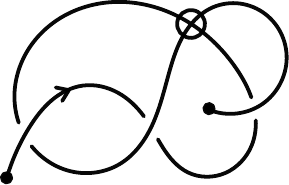}
            \put(-10,5){$a_1$}
            \put(25,25){$a_2$}
            \put(85,1){$a_3$}
            \put(35,54){$a_4$}
            \put(25,-4){$a_5$}
            \put(97,58){$a_6$}
            \put(68,16){$a_7$}
            \end{overpic}
            \vspace{0.3 cm}
			\caption{A virtual knotoid diagram $K_2$.}
            \vspace{0.3 cm}
			\label{31}
		\end{subfigure}
        \begin{subfigure}{0.5\textwidth}
			\centering
			\begin{overpic}[width=60 mm]{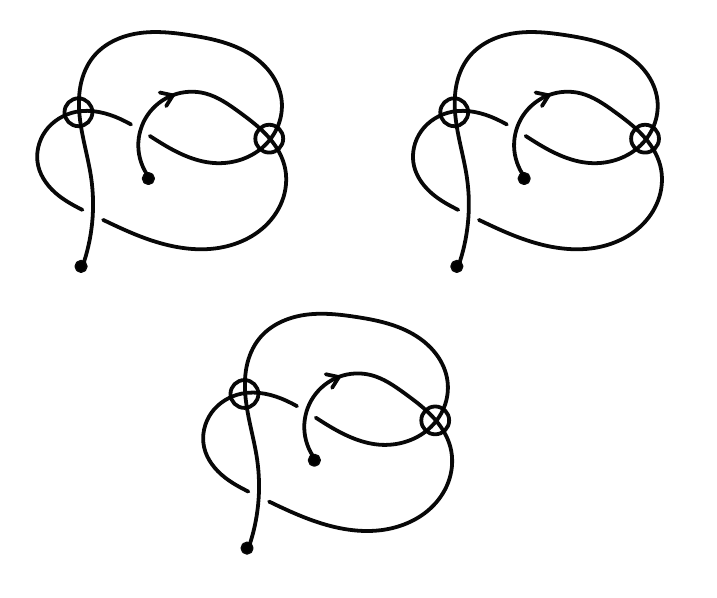}
                \put(39.5,16){\small 3}
                \put(52,11){\small 1}
                \put(24,20){\small 2}
                \put(41,34){\small 1}
		        \put(30,7){\small 3}

                \put(69,55){\small 1}
                \put(81.5,50.5){\small 3}
                \put(53.5,59){\small 2}
                \put(70.5,73){\small 3}
		        \put(59.5,46){\small 1}

                \put(16.5,55){\small 2}
                \put(29,50.5){\small 2}
                \put(1,59){\small 2}
                \put(18,73){\small 2}
		        \put(7,46){\small 2}
			\end{overpic}
			\caption{$\mathbb{Z}_3$-colorings of $K_1$.}
			\label{21a}
		\end{subfigure}
        \begin{subfigure}{0.5\textwidth}
			\centering
			\begin{overpic}[width=65 mm]{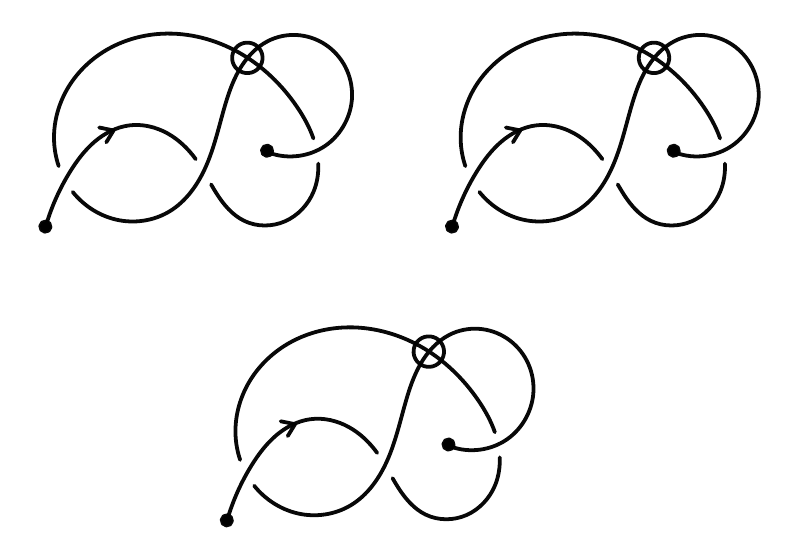}
                \put(38,12.5){\small 1}
                \put(59,1){\small 3}
                \put(37,0){\small 3}
                \put(41,24){\small 1}
		        \put(24,3){\small 3}
                \put(66,26){\small 1}
                \put(55,8.5){\small 3}

                \put(65,49){\small 3}
                \put(87,37.5){\small 1}
                \put(65,36.5){\small 1}
                \put(69,60.5){\small 3}
		        \put(52,39.5){\small 1}
                \put(94,62.5){\small 3}
                \put(83,45){\small 1}

                \put(14.5,49){\small 2}
                \put(36.5,37.5){\small 2}
                \put(14.5,36.5){\small 2}
                \put(18.5,60.5){\small 2}
		        \put(1.5,39.5){\small 2}
                \put(43.5,62.5){\small 2}
                \put(32.5,45){\small 2}
			\end{overpic}
            \vspace{0.2 cm}
			\caption{ $\mathbb{Z}_3$-colorings of $K_2$.}
			\label{31a}
		\end{subfigure}
		\caption{$\mathbb{Z}_3$-colorings of two virtual knotoid diagrams.}
		\label{2242}
	\end{figure}
    Two virtual knotoid diagrams $K_1$ and $K_2$ are given in Figure~\ref{21} and \ref{31}, respectively. These two diagrams admit three $\Z_3$-colorings each, as depicted in Figures~\ref{21a} and \ref{31a}, respectively. Biquandle virtual matrices of $K_1$ and $K_2$ are identical and equal to the identity matrix.
    
    The biquandle virtual bracket expression of $K_1$ is explicitly computed in Example 4.9 of \cite{gugumcu2026biquandle} and it reads:
    \begin{align*}
		&\fbvb{K_1}=\omega^{2}\,[\,\delta^2(D_{a_4,a_1}D_{a_3,a_4}+C_{a_4,a_1}U_{a_3,a_4}+U_{a_4,a_1}C_{a_3,a_4})\notag\\&+\delta(D_{a_4,a_1}C_{a_3,a_4}+D_{a_4,a_1}U_{a_3,a_4}+C_{a_4,a_1}D_{a_3,a_4}+C_{a_4,a_1}C_{a_3,a_4}+U_{a_4,a_1}D_{a_3,a_4}+U_{a_4,a_1}U_{a_3,a_4})].
	\end{align*}
    Hence, its normalized version is,
    \begin{align*}
		&\fbvbn{K_1}=\omega^{2}\,[\delta(D_{a_4,a_1}D_{a_3,a_4}+C_{a_4,a_1}U_{a_3,a_4}+U_{a_4,a_1}C_{a_3,a_4})\notag\\&+D_{a_4,a_1}C_{a_3,a_4}+D_{a_4,a_1}U_{a_3,a_4}+C_{a_4,a_1}D_{a_3,a_4}+C_{a_4,a_1}C_{a_3,a_4}+U_{a_4,a_1}D_{a_3,a_4}+U_{a_4,a_1}U_{a_3,a_4}].
	\end{align*}
   
The virtual knotoid diagram $K_2$ has three classical crossings, thus it has $3^3=27$ states. Summing the contributions of all these states, we obtain the normalized biquandle virtual bracket expression of $K_2$ as
\begin{align*}
\fbvbn{K_2}=\omega^3\Big[&
\delta^2\big(D_{a_5,a_1}D_{a_3,a_5}D_{a_4,a_6}\big)\\
&+\delta\big(
U_{a_5,a_1}U_{a_3,a_5}C_{a_4,a_6}
+U_{a_5,a_1}C_{a_3,a_5}U_{a_4,a_6}
+U_{a_5,a_1}D_{a_3,a_5}D_{a_4,a_6}\\
&\qquad+C_{a_5,a_1}U_{a_3,a_5}U_{a_4,a_6}
+C_{a_5,a_1}C_{a_3,a_5}C_{a_4,a_6}
+C_{a_5,a_1}D_{a_3,a_5}D_{a_4,a_6}\\
&\qquad+D_{a_5,a_1}U_{a_3,a_5}D_{a_4,a_6}
+D_{a_5,a_1}C_{a_3,a_5}D_{a_4,a_6}
+D_{a_5,a_1}D_{a_3,a_5}U_{a_4,a_6}\\
&\qquad+D_{a_5,a_1}D_{a_3,a_5}C_{a_4,a_6}
\big)\\
&+\big(
U_{a_5,a_1}U_{a_3,a_5}U_{a_4,a_6}
+U_{a_5,a_1}U_{a_3,a_5}D_{a_4,a_6}
+U_{a_5,a_1}C_{a_3,a_5}C_{a_4,a_6}\\
&\qquad+U_{a_5,a_1}C_{a_3,a_5}D_{a_4,a_6}
+U_{a_5,a_1}D_{a_3,a_5}U_{a_4,a_6}
+U_{a_5,a_1}D_{a_3,a_5}C_{a_4,a_6}\\
&\qquad+C_{a_5,a_1}U_{a_3,a_5}C_{a_4,a_6}
+C_{a_5,a_1}U_{a_3,a_5}D_{a_4,a_6}
+C_{a_5,a_1}C_{a_3,a_5}U_{a_4,a_6}\\
&\qquad+C_{a_5,a_1}C_{a_3,a_5}D_{a_4,a_6}
+C_{a_5,a_1}D_{a_3,a_5}U_{a_4,a_6}
+C_{a_5,a_1}D_{a_3,a_5}C_{a_4,a_6}\\
&\qquad+D_{a_5,a_1}U_{a_3,a_5}U_{a_4,a_6}
+D_{a_5,a_1}U_{a_3,a_5}C_{a_4,a_6}
+D_{a_5,a_1}C_{a_3,a_5}U_{a_4,a_6}\\
&\qquad+D_{a_5,a_1}C_{a_3,a_5}C_{a_4,a_6}
\big)\Big].
\end{align*}

    We evaluate these expressions for each $\Z_3$-coloring of $K_1$ and $K_2$, respectively, with respect to the given biquandle virtual bracket and form Table~\ref{tab:normalized-comparison} consisting of biquandle counting invariants and biquandle virtual bracket polynomial and matrix invariants of $K_1$ and $K_2$.

    \begin{table}[htbp]
    \centering
    \renewcommand{\arraystretch}{1.6}
    \resizebox{\textwidth}{!}{%
    \begin{tabular}{c cc cc cc}
    \toprule
    & \multicolumn{2}{c}{Biquandle counting invariants}
    & \multicolumn{2}{c}{Biquandle virtual bracket invariants}
    & \multicolumn{2}{c}{Normalized biquandle virtual bracket invariants} \\
    \cmidrule(lr){2-3}
    \cmidrule(lr){4-5}
    \cmidrule(lr){6-7}
    Diagram
    & $\Phi_{\mathbb{Z}_3}(K)$
    & $\mathcal{M}_{\mathbb{Z}_3}(K)$
    & $\Phi_{\mathbb{Z}_3}^{\beta}(K)$
    & $\mathcal{M}_{\mathbb{Z}_3}^{\beta}(K)$
    & $\widetilde{\Phi}_{\mathbb{Z}_3}^{\beta}(K)$
    & $\widetilde{\mathcal{M}}_{\mathbb{Z}_3}^{\beta}(K)$ \\
    \midrule
    $K_1$
    & $3$
    & $\begin{pmatrix}
    1&0&0\\
    0&1&0\\
    0&0&1
    \end{pmatrix}$
    & $3u^3$
    & $\begin{pmatrix}
    u^3&0&0\\
    0&u^3&0\\
    0&0&u^3
    \end{pmatrix}$
    & $u+u^4+u^7$
    & $\begin{pmatrix}
    u^7&0&0\\
    0&u&0\\
    0&0&u^4
    \end{pmatrix}$ \\
    \addlinespace[1em]
    $K_2$
    & $3$
    & $\begin{pmatrix}
    1&0&0\\
    0&1&0\\
    0&0&1
    \end{pmatrix}$
    & $3u^3$
    & $\begin{pmatrix}
    u^3&0&0\\
    0&u^3&0\\
    0&0&u^3
    \end{pmatrix}$
    & $3u$
    & $\begin{pmatrix}
    u&0&0\\
    0&u&0\\
    0&0&u
    \end{pmatrix}$ \\
    \bottomrule
    \end{tabular}%
    }
    \caption{Comparison of the biquandle counting invariants,
    biquandle virtual bracket invariants, and their normalized
    versions for $K_1$ and $K_2$.}
    \label{tab:normalized-comparison}
    \end{table}

    It follows that the biquandle counting invariants and the unnormalized biquandle virtual bracket polynomial and matrix invariants of $K_1$ and $K_2$ are the same. However, their normalized biquandle virtual bracket invariants differ. In particular,
    \[
    \widetilde{\Phi}_{\mathbb{Z}_3}^{\beta}(K_1)=u+u^4+u^7
    \neq
    3u=\widetilde{\Phi}_{\mathbb{Z}_3}^{\beta}(K_2),
    \]
    and
    \[\widetilde{\mathcal{M}}_{\mathbb{Z}_3}^{\beta}(K_1)=\begin{pmatrix}
    u^7&0&0\\
    0&u&0\\
    0&0&u^4
    \end{pmatrix}\neq \begin{pmatrix}
    u&0&0\\
    0&u&0\\
    0&0&u \end{pmatrix}=\widetilde{\mathcal{M}}_{\mathbb{Z}_3}^{\beta}(K_2).\]
    Consequently, the normalized biquandle virtual bracket invariants distinguish $K_1$ and $K_2$, while their unnormalized versions do not. This example demonstrates that normalization can yield a strictly stronger invariant when the loop value $\delta$ is a zero divisor in the coefficient ring.
    \end{example}

    \begin{remark}
        The same phenomenon also occurs for classical and virtual links when the loop value $\delta$ of a biquandle virtual bracket is a zero divisor in the coefficient ring. We omit an explicit example, as our focus in this paper is on virtual knotoids.
    \end{remark}

    We next investigate the behavior of the normalized biquandle virtual bracket matrix under connected sum. We first define a matrix product induced by the $\star$ product of polynomials.

    \begin{definition}
    \label{def:bvb-matrix-product}
        Let $\mathcal{A}=(a_{ij})$ and $\mathcal{B}=(b_{ij})$ be $n\times n$ matrices whose entries are polynomials encoding finite
        multisets of elements of $R$. We define $\mathcal{A}\odot\mathcal{B}$ by
        \[
        (\mathcal{A}\odot\mathcal{B})_{ik}
        =
        \sum_{j=1}^{n}
        a_{ij}\star b_{jk}.
        \]
    \end{definition}

    \begin{proposition}
        The product $\odot$ is associative.
    \end{proposition}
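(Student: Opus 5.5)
The plan is to reduce associativity of $\odot$ to associativity, commutativity and distributivity properties of the bracket product $\star$. It is cleanest to view $\star$ as the multiplication of the monoid algebra $\mathbb{Z}[(R,\cdot)]$, or $\mathbb{N}[(R,\cdot)]$ if we stay with multiset encodings. Here the symbol $u^a$ corresponds to the basis element $a\in R$, and $u^a\star u^b=u^{ab}$ is exactly the multiplication of basis elements in the multiplicative monoid $(R,\cdot)$.

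First, I will check that $\star$ is bilinear over $\mathbb{Z}$, that is, distributive over polynomial addition in each variable. This is immediate from the definition $p\star q=\sum_{a,b}m_an_b\,u^{ab}$, since the coefficients enter linearly.

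Next, I will check that $\star$ is associative. On monomials, $(u^a\star u^b)\star u^c=u^{(ab)c}=u^{a(bc)}=u^a\star(u^b\star u^c)$ by associativity of multiplication in $R$. Bilinearity then extends this to all polynomials. Equivalently, $(p\star q)\star r$ and $p\star(q\star r)$ both equal
\[
\sum_{a,b,c}m_an_br_c\,u^{abc},
\]
which one sees by expanding and regrouping finite sums.

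With these two facts, the matrix computation is the standard one. For $n\times n$ matrices $\mathcal{A},\mathcal{B},\mathcal{C}$, the $(i,l)$-entry of $(\mathcal{A}\odot\mathcal{B})\odot\mathcal{C}$ is
\[
\sum_{k=1}^n\Bigl(\sum_{j=1}^n a_{ij}\star b_{jk}\Bigr)\star c_{kl}=\sum_{j,k=1}^n (a_{ij}\star b_{jk})\star c_{kl},
\]
using right distributivity. Likewise, the $(i,l)$-entry of $\mathcal{A}\odot(\mathcal{B}\odot\mathcal{C})$ is
\[
\sum_{j,k=1}^n a_{ij}\star(b_{jk}\star c_{kl}),
\]
using left distributivity. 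These agree by associativity of $\star$.

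The only real obstacle is bookkeeping. The "polynomials" $u^a$ have exponents in an arbitrary ring $R$, so I must make sure each sum is finite and coefficients combine correctly when $ab=a'b'$. Treating everything as finitely supported functions $R\to\mathbb{Z}$, with $\star$ as convolution over $(R,\cdot)$, handles this cleanly. Alternatively, one can argue through Remark~\ref{rem:encodings}: both sides encode the multiset of triple products $abc$ over chains $i\to j\to k\to l$, counted with multiplicity.
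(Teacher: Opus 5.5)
Your proposal is correct and follows the paper's proof: associativity of $\star$ together with expanding both $(i,\ell)$-entries to $\sum_{j,k} a_{ij}\star b_{jk}\star c_{k\ell}$. You also make explicit the distributivity of $\star$ over polynomial addition and the finite-support bookkeeping for exponents in $R$, both of which the paper uses tacitly; the commutativity you mention at the outset is never actually needed.
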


    \begin{proof}
        It is straightforward to verify that the operation $\star$ is associative.
        Hence, it follows that for matrices $\mathcal{A}$, $\mathcal{B}$ and $\mathcal{C}$,
        \[
        ((\mathcal A\odot\mathcal B)\odot\mathcal C)_{i\ell}=
        \sum_{j,k} a_{ij}\star b_{jk}\star c_{k\ell}=
        (\mathcal A\odot(\mathcal B\odot\mathcal C))_{i\ell}.
        \]
    \end{proof}

    \begin{theorem}
        Let $X=\{x_1,\ldots,x_n\}$ be a finite biquandle, $\beta$ a
        biquandle virtual bracket over a commutative unitary number ring $R$, and let $K_1$
        and $K_2$ be two virtual knotoid diagrams. Then
        \[
        \widetilde{\mathcal{M}}_X^{\beta}(K_1\#K_2)
        =
        \widetilde{\mathcal{M}}_X^{\beta}(K_1)
        \odot
        \widetilde{\mathcal{M}}_X^{\beta}(K_2).
        \]
    \end{theorem}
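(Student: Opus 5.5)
The plan is to compare the two sides entrywise. Fix $i,k\in\{1,\ldots,n\}$. By definition, the $(i,k)$-entry of $\widetilde{\mathcal{M}}_X^{\beta}(K_1\#K_2)$ is the polynomial $\widetilde{\Phi}_{X_{ik}}^{\beta}(K_1\#K_2)$, which encodes the multiset $\widetilde{\Phi}_{X_{ik}}^{\beta,M}(K_1\#K_2)$. By Definition~\ref{def:bvb-matrix-product}, the $(i,k)$-entry of the right-hand side is
\[
\sum_{j=1}^{n}\widetilde{\Phi}_{X_{ij}}^{\beta}(K_1)\star\widetilde{\Phi}_{X_{jk}}^{\beta}(K_2).
\]
It therefore suffices to show that these two polynomials coincide.

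The first step invokes Theorem~\ref{thm:reduced-bvb-multiset-product}, which gives the multiset identity
\[
\widetilde{\Phi}_{X_{ik}}^{\beta,M}(K_1\#K_2)=\biguplus_{j=1}^{n}\left(\widetilde{\Phi}_{X_{ij}}^{\beta,M}(K_1)\cdot\widetilde{\Phi}_{X_{jk}}^{\beta,M}(K_2)\right).
\]
The second step translates this identity into polynomials using Remark~\ref{rem:encodings}. Each product $\widetilde{\Phi}_{X_{ij}}^{\beta,M}(K_1)\cdot\widetilde{\Phi}_{X_{jk}}^{\beta,M}(K_2)$ is encoded by $\widetilde{\Phi}_{X_{ij}}^{\beta}(K_1)\star\widetilde{\Phi}_{X_{jk}}^{\beta}(K_2)$, and the multiset union over $j$ is encoded by the ordinary sum. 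Both sides therefore encode the same multiset.

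The only point needing care is that the encoding of finite multisets of $R$ by polynomials $\sum_a m_a u^a$ is injective, and that it is compatible with $\star$ and with $+$. I will check this directly. If $p=\sum_a m_au^a$ encodes $\mathcal A$ and $q=\sum_b n_bu^b$ encodes $\mathcal B$, then the coefficient of $u^r$ in $p\star q$ is $\sum_{ab=r}m_an_b$. This is exactly the number of pairs $(a,b)$, counted with multiplicity, whose product is $r$, which is the multiplicity of $r$ in $\mathcal A\cdot\mathcal B$. The distinct exponents $u^a$ are treated as formally distinct monomials, since $R$ is a number ring. Hence equal multisets give equal polynomials, and the entries agree for all $i,k$.

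I do not expect a genuine obstacle, since all the combinatorial content lies in Theorem~\ref{thm:reduced-bvb-multiset-product} and Corollary~\ref{cor:reduced-bvb-product}. The step most worth double-checking is the index bookkeeping. The middle index $j$ must be the head color of $K_1$ and the tail color of $K_2$, so the sum pairs $\widetilde{\Phi}_{X_{ij}}^{\beta}(K_1)$ with $\widetilde{\Phi}_{X_{jk}}^{\beta}(K_2)$, matching the ordering in Definition~\ref{def:bvb-matrix-product}.
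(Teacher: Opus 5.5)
Your proposal is correct and follows the paper's own proof. Like the paper, you fix $(i,k)$, invoke Theorem~\ref{thm:reduced-bvb-multiset-product}, and translate multiset union and multiset product into $+$ and $\star$ via Remark~\ref{rem:encodings}. Your pairing of $\widetilde{\Phi}_{X_{ij}}^{\beta}(K_1)$ with $\widetilde{\Phi}_{X_{jk}}^{\beta}(K_2)$ is the correct indexing; the paper's displayed intermediate formula mistakenly writes $ij$ in the second factor. Also, only well-definedness of the encoding is needed for this direction, not its injectivity.
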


    \begin{proof}
        Let $i,k\in\{1,\ldots,n\}$ be fixed. By
        Theorem~\ref{thm:reduced-bvb-multiset-product}, we have
        \[
        \widetilde{\Phi}_{X_{ik}}^{\beta,M}(K_1\#K_2)
        =
        \biguplus_{j=1}^{n}
        \left(
        \widetilde{\Phi}_{X_{ij}}^{\beta,M}(K_1)
        \cdot
        \widetilde{\Phi}_{X_{jk}}^{\beta,M}(K_2)
        \right).
        \]
        The polynomial encoding multiset union is obtained by ordinary polynomial
        addition, while the polynomial encoding multiset product is obtained using
        $\star$. Hence,
        \[
        \widetilde{\Phi}_{X_{ik}}^{\beta}(K_1\#K_2)=
            \sum_{j=1}^{n}\left(
            \widetilde{\Phi}_{X_{ij}}^{\beta}(K_1)\star
            \widetilde{\Phi}_{X_{ij}}^{\beta}(K_2)\right).
        \]
        By Definition~\ref{def:bvb-matrix-product}, we have
        \[
        \widetilde{\mathcal{M}}_X^{\beta}(K_1\#K_2)_{i,k}=\left(\widetilde{\mathcal{M}}_X^{\beta}(K_1)
        \odot
        \widetilde{\mathcal{M}}_X^{\beta}(K_2)\right)_{i,k}
        \]
        Since this holds for every $i,k\in\{1,\ldots,n\}$, the result follows.
    \end{proof}

    \begin{remark}
        The preceding matrix formula is stated for the normalized biquandle virtual
        bracket because it is multiplicative without any assumption on $\delta$. For the unnormalized biquandle virtual bracket matrix, the relation $\bvb{K_f}=\delta\,\bvbn{K_f}$
        together with the connected-sum formula gives
        \[
        \mathcal M_X^\beta(K_1)
        \odot
        \mathcal M_X^\beta(K_2)
        =
        \rho_\delta\!\left(
        \mathcal M_X^\beta(K_1\#K_2)
        \right),
        \]
        where $\rho_\delta$ is defined by
        \[
        \rho_\delta\left(\sum_a m_a u^a\right)=
        \sum_a m_a u^{\delta a},
        \]
        and applied to a matrix entrywise. If $\delta$ is invertible, then
        \[
        \mathcal M_X^\beta(K_1\#K_2)=\rho_{\delta^{-1}}\left(
        \mathcal M_X^\beta(K_1)\odot\mathcal M_X^\beta(K_2)\right).
        \]
        Thus, the normalized formulation of the state-sum gives the connected-sum composition law
        without requiring $\delta$ to be invertible. The corresponding formula for the ordinary biquandle virtual bracket matrix is obtained
        when $\delta$ is invertible.
    \end{remark}

\section{Discussion}
\label{sec:discussion}

    In this paper, we introduced a diagrammatic description of the connected sum of virtual knotoids through virtual connection arcs which can be applied to any pair of representative virtual knotoid diagrams. This description is specific to the virtual setting and particularly convenient for the main purpose of the rest of the paper. Since it is applicable to any pair of given planar virtual knotoid diagrams, it is also useful for computations.

    We then studied the behavior of biquandle-based invariants of virtual knotoids under connected sum. In particular, we obtained explicit formulas relating the invariant of the connected sum in terms of the corresponding invariants of its summands in each case. Besides describing the behavior of these invariants under connected sum, such formulas can substantially simplify computations, particularly for biquandle virtual bracket invariants.

    We also investigated normalized biquandle virtual bracket invariants and their distinguishing power compared to their unnormalized versions. We showed that normalization can yield strictly stronger invariants when the loop value $\delta$ is a zero divisor in the coefficient ring, by giving an example of a pair of virtual knotoids that are distinguished by the normalized invariants but not by the unnormalized ones.

    It would be natural to investigate analogous composition formulas for other enhancements of biquandle coloring invariants of virtual knotoids as a future work.

    \section*{Acknowledgements}
    The authors used ChatGPT for literature search, language polishing, and fixing typos or notational errors. The authors review and take full responsibility for the final content of this manuscript.
    
\bibliographystyle{plain}
\bibliography{references}	

@preamble{
     "\def\cprime{$'$} "
}

@article{carter2002stable,
 author = {Carter, J. Scott and Kamada, Seiichi and Saito, Masahico},
 title = {Stable equivalence of knots on surfaces and virtual knot cobordisms},
 fjournal = {Journal of Knot Theory and its Ramifications},
 journal = {J. Knot Theory Ramifications},
 issn = {0218-2165},
 volume = {11},
 number = {3},
 pages = {311--322},
 year = {2002},
 language = {English},
 doi = {10.1142/S0218216502001639},
 zbMATH = {1790788},
 Zbl = {1004.57007}
}

@article{dorier2018tool,
  title={Knoto-ID: a tool to study the entanglement of open protein chains using the concept of knotoids},
  author={Dorier, Julien and Goundaroulis, Dimos and Benedetti, Fabrizio and Stasiak, Andrzej},
  journal={Bioinformatics},
  volume={34},
  number={19},
  pages={3402--3404},
  year={2018},
  publisher={Oxford University Press}
}

@article{goundaroulis2017topological,
AUTHOR = {Goundaroulis, Dimos and Gügümcü, Neslihan and Lambropoulou, Sofia and Dorier, Julien and Stasiak, Andrzej and Kauffman, Louis},
TITLE = {Topological Models for Open-Knotted Protein Chains Using the Concepts of Knotoids and Bonded Knotoids},
JOURNAL = {Polymers},
VOLUME = {9},
YEAR = {2017},
NUMBER = {9},
ARTICLE-NUMBER = {444},
URL = {https://www.mdpi.com/2073-4360/9/9/444},
PubMedID = {30965745},
ISSN = {2073-4360},
DOI = {10.3390/polym9090444}
}

@article{cattabriga2020,
  author  = {Cattabriga, A. and Horvat, E.},
  title   = {Knot Quandle Decompositions},
  journal = {Mediterranean Journal of Mathematics},
  volume  = {17},
  number  = {3},
  pages   = {98},
  year    = {2020},
  doi     = {10.1007/s00009-020-01530-6}
}

@mastersthesis{pflume,
  author  = {Pflume, Runa},
  title   = {Generalizations of Quandles to Multi-Linkoids},
  school  = {Georg-August-Universit{\"a}t G{\"o}ttingen},
  address = {G{\"o}ttingen, Germany},
  year    = {2023},
  doi     = {10.53846/goediss-10442}
}

@article{gugumcu2026pointed,
author = {G{\"u}g{\"u}mc{\"u}, Neslihan and Pflume, Runa},
title = {Pointed quandles of linkoids},
journal = {Journal of Knot Theory and Its Ramifications},
volume = {35},
number = {13},
pages = {2650030},
year = {2026},
doi = {10.1142/S0218216526500306},

URL = {    https://doi.org/10.1142/S0218216526500306
},
eprint = {https://doi.org/10.1142/S0218216526500306
}
}

@article{cho2019quandle,
	title={Quandle coloring quivers},
	author={Cho, Karina and Nelson, Sam},
	journal={Journal of Knot Theory and Its Ramifications},
	volume={28},
	number={01},
	pages={1950001},
	year={2019},
	publisher={World Scientific}
}

@article{goundaroulis2017studies,
	author  = {Goundaroulis, Dimos and Dorier, Julien and
	Benedetti, Fabrizio and Stasiak, Andrzej},
	title   = {Studies of Global and Local Entanglements of Individual
	Protein Chains Using the Concept of Knotoids},
	journal = {Scientific Reports},
	volume  = {7},
	number  = {1},
	pages   = {6309},
	year    = {2017}
}

@article{gugumcu2026biquandle,
	title={Biquandle virtual brackets and virtual knotoids},
	author={G{\"u}g{\"u}mc{\"u}, Neslihan and Kayaslan, Hamdi},
	journal={Turkish Journal of Mathematics},
	volume={50},
	number={4},
	pages={651--674},
	year={2026},
	publisher={The Scientific and Technological Research Council of T{\"u}rkiye (T{\"U}B{\.I}TAK)}
}

@article{gugumcu2017new,
	title={New invariants of knotoids},
	author={G{\"u}g{\"u}mc{\"u}, Neslihan and Kauffman, Louis H},
	journal={European Journal of Combinatorics},
	volume={65},
	pages={186--229},
	year={2017},
	publisher={Elsevier}
}

@article{kamada2000abstract,
	title={Abstract link diagrams and virtual knots},
	author={Kamada, Naoko and Kamada, Seiichi},
	journal={Journal of Knot Theory and its Ramifications},
	volume={9},
	number={01},
	pages={93--106},
	year={2000},
	publisher={World Scientific}
}

@article{kauffman1999virtual,
	author = {Kauffman, Louis H.},
	title = {Virtual knot theory},
	fjournal = {European Journal of Combinatorics},
	journal = {Eur. J. Comb.},
	issn = {0195-6698},
	volume = {20},
	number = {7},
	pages = {663--690},
	year = {1999},
	language = {English},
	doi = {10.1006/eujc.1999.0314},
	zbMATH = {1382658},
	Zbl = {0938.57006}
}

@article{turaev2012knotoids,
	author = {Turaev, Vladimir},
	title = {Knotoids},
	fjournal = {Osaka Journal of Mathematics},
	journal = {Osaka J. Math.},
	issn = {0030-6126},
	volume = {49},
	number = {1},
	pages = {195--223},
	year = {2012},
	language = {English},
	zbMATH = {6029033},
	Zbl = {1271.57030}
}

@article{kauffman1987state,
	author = {Kauffman, Louis H.},
	title = {State models and the {Jones} polynomial},
	fjournal = {Topology},
	journal = {Topology},
	issn = {0040-9383},
	volume = {26},
	pages = {395--407},
	year = {1987},
	language = {English},
	doi = {10.1016/0040-9383(87)90009-7},
	zbMATH = {4009331},
	Zbl = {0622.57004}
}

@article{gugumcu2025virtual,
	title={Virtual Knotoids in Thickened Surfaces},
	author={G{\"u}g{\"u}mc{\"u}, Neslihan and Kayaslan, Hamdi},
	journal={arXiv preprint arXiv:2502.18160},
	year={2025}
}

@article{gugumcu2021biquandle,
	title={Biquandle brackets and knotoids},
	author={G{\"u}g{\"u}mc{\"u}, Neslihan and Nelson, Sam and Oyamaguchi, Natsumi},
	journal={Journal of Knot Theory and Its Ramifications},
	volume={30},
	number={09},
	pages={2150064},
	year={2021},
	publisher={World Scientific}
}

@article{nelson2017quantum,
	title={Quantum enhancements and biquandle brackets},
	author={Nelson, Sam and Orrison, Michael E and Rivera, Veronica},
	journal={Journal of Knot Theory and Its Ramifications},
	volume={26},
	number={05},
	pages={1750034},
	year={2017},
	publisher={World Scientific}
}

@article{nelson2019biquandle,
	author = {Nelson, Sam and Oshiro, Kanako and Shimizu, Ayaka and Yaguchi, Yoshiro},
	title = {Biquandle virtual brackets},
	fjournal = {Journal of Knot Theory and its Ramifications},
	journal = {J. Knot Theory Ramifications},
	issn = {0218-2165},
	volume = {28},
	number = {11},
	pages = {22},
	note = {Id/No 1940003},
	year = {2019},
	language = {English},
	doi = {10.1142/S0218216519400030},
	zbMATH = {7139440},
	Zbl = {1436.57014}
}
\end{document}